\documentclass[a4paper]{amsproc}
\pdfoutput=1

\usepackage{amssymb}
\usepackage{multirow}
\usepackage{rotating}
\usepackage{arydshln}
\usepackage{enumitem}
\usepackage{natbib}
\usepackage{calc}
\usepackage[T1]{fontenc}   
\usepackage[hang]{caption}
\usepackage[hyperfootnotes=false]{hyperref}
\usepackage{tikz}
\usetikzlibrary{matrix}

\newtheorem{theorem}{Theorem}
\newtheorem{defn}[theorem]{Definition}
\newtheorem{lemma}[theorem]{Lemma}
\theoremstyle{definition}
\newtheorem{remark}[theorem]{Remark}
\newtheorem{example}[theorem]{Example}

\let\originalleft\left
\let\originalright\right
\renewcommand{\left}{\mathopen{}\mathclose\bgroup\originalleft}
\renewcommand{\right}{\aftergroup\egroup\originalright}

\newcommand{\myunderbrace}[3]{%
\makebox[\minof{0pt}{\widthof{$#1$}/2-\widthof{$#2$}/2-\widthof{$#3$}}]{}%
\underbrace{#1}_{\displaystyle\phantom{#3}#2 #3}}

\newcommand{\Singular}{\textsc{Singular}}
\newcommand{\realclassify}{\texttt{realclassify.lib}}
\newcommand{\tY}{\widetilde{Y}}

\DeclareMathOperator{\jet}{jet}
\DeclareMathOperator{\N}{\mathbb{N}}
\DeclareMathOperator{\Q}{\mathbb{Q}}
\DeclareMathOperator{\R}{\mathbb{R}}
\DeclareMathOperator{\C}{\mathbb{C}}
\DeclareMathOperator{\K}{\mathbb{K}}
\DeclareMathOperator{\A}{\mathbb{A}}
\DeclareMathOperator{\NF}{NF}
\DeclareMathOperator{\s}{S}
\DeclareMathOperator{\T}{T}
\DeclareMathOperator{\Aut}{Aut}
\DeclareMathOperator{\id}{id}
\DeclareMathOperator{\Quot}{Quot}
\DeclareMathOperator{\Imag}{Im}
\DeclareMathOperator{\dash}{\!\textnormal{-}\!}
\newcommand{\csim}{\ensuremath{\mathrel{%
  \raisebox{0pt}[\heightof{$\overset{\scriptscriptstyle\C}{\sim}$}]{%
    $\overset{\C}{\sim}$%
  }%
}}}
\newcommand{\ksim}{\ensuremath{\mathrel{%
  \raisebox{0pt}[\heightof{$\overset{\scriptscriptstyle\K}{\sim}$}]{%
    $\overset{\K}{\sim}$%
  }%
}}}

\title[The Classification of Real Singularities Using \textsc{Singular}, %
Part II]%
{The Classification of Real Singularities Using \textsc{Singular}\\
Part II: The Structure of the Equivalence Classes of the Unimodal %
Singularities}

\author{Magdaleen S.\@ Marais}
\address{Magdaleen S.\@ Marais\\
University of Pretoria and African Institute for Mathematical Sciences\\
Department of Mathematics and Applied Mathematics\\
Private bag X20\\
Hatfield 0028\\
South Africa}
\email{magdaleen.marais@up.ac.za}

\author{Andreas Steenpa\ss}
\address{Andreas Steenpa\ss\\
Department of Mathematics\\
University of Kaiserslautern\\
Erwin-Schr\"odinger-Str.\\
67663 Kaiserslautern\\
Germany}
\email{steenpass@mathematik.uni-kl.de}

\thanks{This research project was supported by the African Institute for
Mathematical Sciences, the German National Academic Foundation, and grants
awarded by Wolfram Decker and Gert-Martin Greuel. We are thankful to all of
them.}

\keywords{%
hypersurface singularities, algorithmic classification, real geometry%
}

\begin{document}

\begin{abstract}
In the classification of real singularities by \citet{AVG1985}, normal forms,
as representatives of equivalence classes under right equivalence, are not
always uniquely determined. We describe the complete structure of the
equivalence classes of the unimodal real singularities of corank~$2$. In other
words, we explicitly answer the question which normal forms of different type
are equivalent, and how a normal form can be transformed within the same
equivalence class by changing the value of the parameter. This provides new
theoretical insights into these singularities and has important consequences
for their algorithmic classification.
\end{abstract}

\maketitle

\section{Introduction}

This article is the second part of a series of articles on the algorithmic
classification of real singularities up to modality $1$ and corank $2$. The
first part \citep{MS2015} covers the splitting lemma and the simple
singularities. All the algorithms presented there have been implemented in the
computer algebra system \Singular{} \citep{DGPS} as a library called
\realclassify{} \citep{realclassify}.

Both real and complex singularities have been extensively studied
\citep[see][]{Siersma, A1976, BG1982, GH1993, FK1999, GLS2007, FKN2010}. Our
work is based on the classifications of complex and real singularities of small
modality up to stable equivalence by \citet{AVG1985}. Two power series
$f, g \in \K[[x_1,\ldots,x_n]]$ with a critical point at the origin and
critical value $0$ are complex (if $\K = \C$) or real (if $\K = \R$)
equivalent, denoted by $f \ksim g$, if there exists a $\K$-algebra automorphism
$\phi$ of $\K[[x_1,\ldots,x_n]]$ such that $\phi(f) = g$. They are stable
(complex or real) equivalent if they become (complex or real) equivalent after
the direct addition of non-degenerate quadratic terms.

\begin{table}[p]
\centering
\caption{Normal forms of singularities of modality~$1$ and corank~$2$ as given
in \citet{AVG1985}}
\label{tab:normal_forms}
\begin{minipage}{\textwidth}
\renewcommand{\thempfootnote}{\fnsymbol{mpfootnote}}
\addtocounter{mpfootnote}{1}
\newcommand{\setfnA}{\footnote{\label{fnA}%
Note that the restriction $a^2 \neq 4$ applies to the normal forms of the real
subtypes $X_9^{++}$, $X_9^{--}$, and $J_{10}^+$ as well as to the normal forms
of the complex types $X_9$ and $J_{10}$ while the restriction $a^2 \neq -4$
applies to the normal forms of the real subtypes $X_9^{+-}$, $X_9^{-+}$, and
$J_{10}^-$ if we allow complex parameters.}}
\newcommand{\reffnA}{\textsuperscript{\ref*{fnA}}}
\centering
\begin{tabular}{|c|c|c|c|c|}
\hline

\multicolumn{1}{|c}{}
 & & Complex     & Normal forms     & \multirow{2}{*}{Restrictions} \\[-0.5ex]
\multicolumn{1}{|c}{}
 & & normal form & of real subtypes &                               \\
\hline\hline

\multirow{6}{*}{\begin{sideways}Parabolic\end{sideways}}

& \multirow{4}{*}{$X_9$} & \multirow{4}{*}{$x^4+ax^2y^2+y^4$}
  & $+x^4+ax^2y^2+y^4$ $(X_9^{++})$ & \multirow{2}{*}{$a^2\neq+4$\setfnA}
\\ \cline{4-4}
&&& $-x^4+ax^2y^2-y^4$ $(X_9^{--})$ &
\\ \cline{4-5}
&&& $+x^4+ax^2y^2-y^4$ $(X_9^{+-})$ & \multirow{2}{*}{$a^2\neq-4$\reffnA}
\\ \cline{4-4}
&&& $-x^4+ax^2y^2+y^4$ $(X_9^{-+})$ &
\\ \cline{2-5}

& \multirow{2}{*}{$J_{10}$} & \multirow{2}{*}{$x^3+ax^2y^2+xy^4$}
  & $x^3+ax^2y^2+xy^4$ $(J_{10}^+)$ & $a^2 \neq +4$\reffnA \\ \cline{4-5}
&&& $x^3+ax^2y^2-xy^4$ $(J_{10}^-)$ & $a^2 \neq -4$\reffnA \\ \hline

\multirow{12}{*}{\begin{sideways}Hyperbolic\end{sideways}}

& \multirow{2}{*}{$J_{10+k}$} & \multirow{2}{*}{$x^3+x^2y^2+ay^{6+k}$}
  & $x^3+x^2y^2+ay^{6+k}$ $(J_{10+k}^+)$
      & \multirow{2}{*}{$a \neq 0,\; k > 0$} \\ \cline{4-4}
&&& $x^3-x^2y^2+ay^{6+k}$ $(J_{10+k}^-)$ &   \\ \cline{2-5}

& \multirow{4}{*}{$X_{9+k}$} & \multirow{4}{*}{$x^4+x^2y^2+ay^{4+k}$}
  & $+x^4+x^2y^2+ay^{4+k}$ $(X_{9+k}^{++})$
      & \multirow{4}{*}{$a \neq 0,\; k > 0$}  \\ \cline{4-4}
&&& $-x^4-x^2y^2+ay^{4+k}$ $(X_{9+k}^{--})$ & \\ \cline{4-4}
&&& $+x^4-x^2y^2+ay^{4+k}$ $(X_{9+k}^{+-})$ & \\ \cline{4-4}
&&& $-x^4+x^2y^2+ay^{4+k}$ $(X_{9+k}^{-+})$ & \\ \cline{2-5}

& \multirow{4}{*}{$Y_{r,s}$} & \multirow{4}{*}{$x^2y^2+x^r+ay^s$}
  & $+x^2y^2+x^r+ay^s$ $(Y_{r,s}^{++})$
      & \multirow{4}{*}{$a \neq 0,\; r,s > 4$} \\ \cline{4-4}
&&& $-x^2y^2-x^r+ay^s$ $(Y_{r,s}^{--})$ &      \\ \cline{4-4}
&&& $+x^2y^2-x^r+ay^s$ $(Y_{r,s}^{+-})$ &      \\ \cline{4-4}
&&& $-x^2y^2+x^r+ay^s$ $(Y_{r,s}^{-+})$ &      \\ \cdashline{2-3}\cline{4-5}

& \multirow{2}{*}{$\tY_r$} & \multirow{2}{*}{$(x^2+y^2)^2+ax^r$}
  & $+(x^2+y^2)^2+ax^r$ $(\tY_r^+)$
      & \multirow{2}{*}{$a \neq 0,\; r > 4$} \\ \cline{4-4}
&&& $-(x^2+y^2)^2+ax^r$ $(\tY_r^-)$ &        \\ \hline

\multirow{12}{*}{\begin{sideways}Exceptional\end{sideways}}

& $E_{12}$ & $x^3+y^7+axy^5$ & $x^3+y^7+axy^5$ & - \\ \cline{2-5}

& $E_{13}$ & $x^3+xy^5+ay^8$ & $x^3+xy^5+ay^8$ & - \\ \cline{2-5}

& \multirow{2}{*}{$E_{14}$} & \multirow{2}{*}{$x^3+y^8+axy^6$}
  & $x^3+y^8+axy^6$ $(E_{14}^+)$ & \multirow{2}{*}{-} \\ \cline{4-4}
&&& $x^3-y^8+axy^6$ $(E_{14}^-)$ &                    \\ \cline{2-5}

& $Z_{11}$ & $x^3y+y^5+axy^4$ & $x^3y+y^5+axy^4$ & - \\ \cline{2-5}

& $Z_{12}$ & $x^3y+xy^4+ax^2y^3$ & $x^3y+xy^4+ax^2y^3$ & - \\ \cline{2-5}

& \multirow{2}{*}{$Z_{13}$} & \multirow{2}{*}{$x^3y+y^6+axy^5$}
  & $x^3y+y^6+axy^5$ $(Z_{13}^+)$ & \multirow{2}{*}{-} \\ \cline{4-4}
&&& $x^3y-y^6+axy^5$ $(Z_{13}^-)$ &                    \\ \cline{2-5}

& \multirow{2}{*}{$W_{12}$} & \multirow{2}{*}{$x^4+y^5+ax^2y^3$}
  & $+x^4+y^5+ax^2y^3$ $(W_{12}^+)$ & \multirow{2}{*}{-} \\ \cline{4-4}
&&& $-x^4+y^5+ax^2y^3$ $(W_{12}^-)$ &                    \\ \cline{2-5}

& \multirow{2}{*}{$W_{13}$} & \multirow{2}{*}{$x^4+xy^4+ay^6$}
  & $+x^4+xy^4+ay^6$ $(W_{13}^+)$ & \multirow{2}{*}{-} \\ \cline{4-4}
&&& $-x^4+xy^4+ay^6$ $(W_{13}^-)$ &                    \\ \hline

\end{tabular}
\end{minipage}
\end{table}

In this article, we focus on the unimodal singularities of corank 2. Their
complex and real normal forms can be found in Table~\ref{tab:normal_forms}.
Just as for the simple singularities \citep[cf.][]{MS2015}, it turns out that
the complex singularity types split up into one or several real subtypes and
that the normal forms of the real subtypes belonging to the same complex type
differ from each other only in the signs of some terms. We therefore sometimes
refer to the complex singularity types as main types. The hyperbolic type
$\tY_r$ is an exception because it is complex equivalent to $Y_{r,r}$ and only
occurs as a type on its own in the real classification.

Some of the normal forms in Table~\ref{tab:normal_forms} are equivalent to
others. Such equivalences occur both between different real subtypes and
between normal forms with different values of the parameter $a$. However, there
are no equivalences between different main types. To give an example,
$x^4-4x^2y^2+y^4$, the normal form of $X_9^{++}$ with $a = -4$, is equivalent
to $-x^4+10x^2y^2-y^4$, the normal form of $X_9^{--}$ with $a = 10$, via the
coordinate transformation $x \mapsto c(x+y)$, $y \mapsto c(x-y)$ with
$c = \frac{1}{\sqrt[4]{2}}$. Examples like this one have consequences for the
algorithmic classification of real singularities. The question if the
singularity in the example is of real type $X_9^{++}$ or of real type
$X_9^{--}$ is not well-posed and the value of the parameter is not uniquely
determined. Note that this problem does not occur for the simple singularities:
By definition, their normal forms do not admit parameters, and there are no
equivalences between different real subtypes except for the main types $A_k$
where $k$ is even, cf.\@ \citet{MS2015}.

The goal of this article is to determine the complete structure of the
equivalence classes for the unimodal real singularities of corank $2$. Based on
these results, we will present algorithms to determine the equivalence class of
a given unimodal real singularity of corank $2$ in a subsequent part of this
series of articles. If $T_1$ and $T_2$ are subtypes of the same singularity
main type $T$ and if $g_1(a)$ and $g_2(a)$ are the normal forms of $T_1$ and
$T_2$, respectively, where $a$ denotes the value of the parameter, then we are
interested in the set of all pairs $(u, v)$ such that $g_1(u)$ is equivalent to
$g_2(v)$. This question can be asked in three different ways: If we consider
complex values of $u$ and $v$ and complex coordinate transformations, we denote
the corresponding set by $P_1(T_1, T_2)$, for real values of $u$ and $v$, but
still complex transformations by $P_2(T_1, T_2)$, and finally by
$P_3(T_1, T_2)$ if we consider only real values of $u$ and $v$ and real
transformations, cf.\@ Definition~\ref{def:Psets}. We study all three sets
$P_1$, $P_2$, and $P_3$ for two reasons: First, we need $P_1$ and $P_2$ to
compute $P_3$ which is the set we are mainly interested in for our purpose.
Second, besides the real structure, we thus also obtain the complete complex
structure of the equivalence classes for all the specific normal forms given in
Table~\ref{tab:normal_forms} which is an interesting result on its own.

The formal definitions of these sets and other basic notations are introduced
in Section~\ref{sec:Psets}, along with different ways how $P_1(T_1, T_2)$,
$P_2(T_1, T_2)$, and $P_3(T_1, T_2)$ can be conveniently written down in
concrete cases. The following sections are devoted to the computation of these
sets for any two real subtypes $T_1$ and $T_2$ listed in
Table~\ref{tab:normal_forms}. We first recall the definitions of (piecewise)
weighted jets and filtrations in Section~\ref{sec:weights}. They play a major
role in the proof of Theorem~\ref{thm:sufficient_sets}, the main result of
Section~\ref{sec:sufficient_sets}. This theorem allows us to restrict ourselves
to a small subset of coordinate transformations, which we call a sufficient
set, if we want to determine $P_1(T_1, T_2)$. It is thus the theoretic basis
for Section~\ref{sec:computations} where we explain how $P_1(T_1, T_2)$,
$P_2(T_1, T_2)$, and $P_3(T_1, T_2)$ can be computed using \Singular{}. We also
give an example with explicit \Singular{} commands. These methods do not apply
for the singularity type $\tY_r$ which is treated separately in
Section~\ref{ssec:Yr}. Section~\ref{sec:results} contains the results of these
computations in a concise form. Finally, we point out some remarkable aspects
of the results in this article as well as their consequences for the
algorithmic classification of the unimodal real singularities of corank $2$ in
Section~\ref{sec:interpretation}. The maybe most surprising outcome is that the
real subtype $J_{10}^-$ is actually redundant whereas $J_{10}^+$ is not.

\section{The Sets of Parameter Transformations
\texorpdfstring{$\boldsymbol{P_1}$, $\boldsymbol{P_2}$, and
$\boldsymbol{P_3}$}{P1, P2, and P3}}%
\label{sec:Psets}

Let us start with some basic definitions. Throughout the rest of this article,
let $\K$ be, in each case, either $\R$ or $\C$.

\begin{defn}
Two power series $f, g \in \K[[x_1,\ldots,x_n]]$ are called $\K$-equivalent,
denoted by $f \ksim g$, if there exists a $\K$-algebra automorphism
$\phi$ of $\K[[x_1,\ldots,x_n]]$ such that $\phi(f) = g$.
\end{defn}

Note that $\ksim$ is an equivalence relation on $ \K[[x_1,\ldots,x_n]]$.
\citet{AVG1985} give the following formal definition for normal forms w.r.t.\@
this relation:

\begin{defn}
Let $K \subset \K[[x_1,\ldots,x_n]]$ be a union of equivalence classes w.r.t.\@
the relation $\ksim$. A \emph{normal form} for $K$ is given by a smooth map
\[
\Phi: B \longrightarrow \K[x_1,\ldots,x_n] \subset \K[[x_1,\ldots,x_n]]
\]
of a finite-dimensional $\K$-linear \emph{space of parameters} $B$ into the
space of polynomials for which the following three conditions hold:

\begin{enumerate}
\item $\Phi(B)$ intersects all the equivalence classes of K;

\item the inverse image in $B$ of each equivalence class is finite;

\item the inverse image of the whole complement to $K$ is contained in some
proper hypersurface in $B$.
\end{enumerate}
\end{defn}

\begin{remark}
Note that the term \emph{normal form} is subtly ambiguous. According to the
above definition, a normal form is a smooth map where the inverse image of each
equivalence class may contain more than one element, whereas the common meaning
of this term rather refers to the polynomials which are the images under this
map. We could be more precise
and avoid this ambiguity by introducing a new term for either of the two
meanings. However, we stay with the common usage of the term \emph{normal form}
in order to prevent confusion.
\end{remark}

\begin{defn}
Let $S \subset \Aut_{\K}(\K[[x_1,\ldots,x_n]])$ be a set of $\K$-algebra
automorphisms of $\K[[x_1, \ldots, x_n]]$ and let
$f, g \in \K[[x_1, \ldots, x_n]]$ be two power series.
\begin{enumerate}
\item We denote the set of all automorphisms in $S$ which take $f$ to $g$ by
$\T_{\K}^S(f,g)$, i.e.\@
\[
\T_{\K}^S(f,g):=\{\phi\in S\mid \phi(f)=g\}\,.
\]

\item If $S=\Aut_{\K}(\K[[x_1,\ldots,x_n]])$, we simply write
$\T_{\K}(f,g)$ for $\T_{\K}^S(f,g)$, i.e.\@
\[
\T_{\K}(f,g)
:= \{\phi \in \Aut_{\K}(\K[[x_1, \ldots, x_n]]) \mid \phi(f) = g \} \,.
\]
\end{enumerate}
\end{defn}

The above definition is the key ingredient for the definition of $P_1$, $P_2$,
and $P_3$. We also need the following notation.

\begin{remark}
As usual, we denote the field of quotients $\Quot(\K[a])$ by $\K(a)$. Let
$f \in \K(a)[[x_1,\ldots,x_n]]$ be a power series over this quotient field.
Then $f$ can be written as $f = \sum_{\nu \in \N^n} c_\nu \boldsymbol{x}^\nu$
with coefficients $c_\nu = \frac{p_\nu}{q_\nu} \in \K(a)$ where
$p_\nu, q_\nu \in \K[a]$ are polynomials of minimal degree with this property
and $q_\nu \neq 0$ for all $\nu \in \N^n$.

If we consider the polynomials $p_\nu, q_\nu$ as polynomial functions
$p_\nu, q_\nu: \; \K \rightarrow \K$, then we may also consider the
coefficients $c_\nu$ as functions
$c_\nu: \; \K \setminus V(q_\nu) \rightarrow \K$ where $V(q_\nu)$ is the set of
points where $q_\nu$ vanishes. Via this correspondence, we finally get power
series
$f(u) := \sum_{\nu \in \N^n} c_\nu(u) \boldsymbol{x}^\nu
\in \K[[x_1,\ldots,x_n]]$ for each value
$u \in \K \setminus \bigcup_{\nu \in \N^n} V(q_\nu)$.
\end{remark}

We use the notation $f(u)$ throughout this paper. Likewise, we add the
value of the parameter which occurs in the normal form as given in
Table~\ref{tab:normal_forms} in parentheses to the name of the singularity
(sub-)type if we want to refer specifically to the corresponding equivalence
class. For instance, we denote by $E_{14}(3)$ the (complex or real)
right-equivalence class of $x^3+y^8+3xy^6$.

For any specific singularity type $T$, we denote by $\NF(T)$ its normal form as
shown in Table~\ref{tab:normal_forms}, i.e.\@ we write
$\NF\left(E_{14}(a)\right) = \NF\left(E_{14}^+(a)\right)$ for the polynomial
$x^3+y^8+axy^6$ and $\NF\left(E_{14}^-(5)\right)$ for $x^3-y^8+5xy^6$.

We can now state the main definition of this section.

\begin{defn}\label{def:Psets}
\leavevmode
\begin{enumerate}
\item
Given power series $f,g \in \C(a)[[x_1,\ldots,x_n]]$, we define the
first set of parameter transformations of $f$ and $g$ as
\begin{align*}
P_1(f, g)
:= \bigl\{ (u, v) \in \C^2 \mid
&f(u) \text{ and } g(v) \text{ are well-defined and } \\
&\T_{\C}(f(u), g(v)) \neq \varnothing \bigr\} \,.
\end{align*}

\item
Given power series $f,g \in \R(a)[[x_1,\ldots,x_n]]$, we define the
second set of parameter transformations of $f$ and $g$ as
\begin{align*}
P_2(f, g)
:= \bigl\{ (u, v) \in \R^2 \mid
&f(u) \text{ and } g(v) \text{ are well-defined and } \\
&\T_{\C}(f(u), g(v)) \neq \varnothing \bigr\} \,.
\end{align*}

\item
Given power series $f,g \in \R(a)[[x_1,\ldots,x_n]]$, we define the
third set of parameter transformations of $f$ and $g$ as
\begin{align*}
P_3(f, g)
:= \bigl\{ (u, v) \in \R^2 \mid
&f(u) \text{ and } g(v) \text{ are well-defined and } \\
&\T_{\R}(f(u), g(v)) \neq \varnothing \bigr\} \,.
\end{align*}
\end{enumerate}
\end{defn}

\begin{remark}
\leavevmode
\begin{enumerate}
\item
Note that we have $P_3(f, g) \subseteq P_2(f, g) \subseteq P_1(f, g)$ for any
two power series $f,g \in \R(a)[[x_1,\ldots,x_n]]$.

\item
For any two unimodal singularity (sub-)types $T_1, T_2$ and $i \in \{1,2,3\}$,
we simply write $P_i(T_1,T_2)$ instead of $P_i(\NF(T_1(a)), \NF(T_2(a)))$,
e.g.\@ we write $P_1\left(E_{14}^+, E_{14}^+\right)$ for
$P_1\left(\NF\left(E_{14}^+(a)\right), \NF\left(E_{14}^+(a)\right)\right)$.
\end{enumerate}
\end{remark}

For the parabolic singularity types $X_9$ and $J_{10}$, the sets $P_1$, $P_2$,
and $P_3$ can be described in terms of the following definition.

\begin{defn}
For $\Omega \subset \C$, let $(f_i: \Omega \rightarrow \C)_{i \in I}$ be a
family of complex-valued functions on $\Omega$. We define the joint graph of
$(f_i)_{i \in I}$ over $\Omega$ as
\[
\Gamma_\Omega((f_i)_{i \in I})
:= \{ (a, f_i(a)) \in \Omega \times \C \mid a\in \Omega,\; i \in I \}\,.
\]
\end{defn}

It turns out that for the hyperbolic and exceptional unimodal singularities,
$P_1$, $P_2$ and $P_3$ are just unions of sets of the form $(a, ra)_{a \in \K}$
for some $r \in \K$. For those cases we use the following notations.

\begin{defn}\label{def:C_R}
For any polynomial $p(X) \in \C[X]$, we define the sets $C_0(p(X))$ and
$R_0(p(X))$ as
\begin{align*}
C_0(p(X)) &:= \{ (a, ra) \in \C^2 \mid a, r \in \C, \; p(r) = 0 \} \,, \\
R_0(p(X)) &:= \{ (a, ra) \in \R^2 \mid a, r \in \R, \; p(r) = 0 \} \,.
\end{align*}
Additionally, we define $C(p(X))$ and $R(p(X))$ as
\begin{align*}
C(p(X)) &:= C_0(p(X)) \setminus \{(0, 0)\} \,, \\
R(p(X)) &:= R_0(p(X)) \setminus \{(0, 0)\} \,.
\end{align*}
\end{defn}

\begin{remark}
We occasionally use the notation $R\left(X^l-s\right)$ with
$l \in \N \setminus \{0\}$ and $s \in \{-1, +1\}$, e.g.\@ in
Tables~\ref{tab:J10+k_equivalences} and \ref{tab:X9+k_equivalences}. Of course,
this could be written in a more explicit way for many values of $l$ and $s$;
for instance, we could write $\varnothing$ instead of $R\left(X^4+1\right)$.
But distinguishing between different cases would spoil the symmetries of those
tables and we therefore stick to the shorthand notation.
\end{remark}

\section{Weighted Jets and Filtrations of Power Series and Transformations}%
\label{sec:weights}

We briefly introduce the concepts of (piecewise) weighted jets and filtrations.
For background regarding the definitions in this section, we refer to
\citet{A1974}. We assume that the reader is familiar with the notions of
weighted degrees, quasihomogeneous polynomials, and Newton polygons.

\begin{remark}
Let $w$ be a weight on the variables $(x_1, \ldots, x_n)$. Throughout this
paper we always assume that the weighted degree of $x_i$, denoted by
$w\dash\deg(x_i)$, is a natural number for each $i = 1, \ldots, n$.
\end{remark}

\begin{defn}
Let $w_0 := (w_1, \ldots, w_s) \in \left(\N^n\right)^s$ be a finite family of
weights on the variables $(x_1, \ldots, x_n)$. For any term
$t \in \K[x_1,\ldots,x_n]$, we define the piecewise weight of $t$ w.r.t.\@
$w_0$ as
\[
w_0\dash\deg(t) := \min_{i = 1, \ldots, s} w_i\dash\deg(t) \,.
\]
A polynomial $f \in \K[x_1,\ldots,x_n]$ is called piecewise quasihomogeneous of
degree $d$ w.r.t.\@ $w_0$ if $w_0\dash\deg(t) = d$ for any term $t$ of $f$.
\end{defn}

\begin{defn}
Let $w$ be a (piecewise) weight on the variables $(x_1,\ldots,x_n)$.

\begin{enumerate}
\item
Let $f = \sum_{i = 0}^{\infty} f_i$ be the decomposition of
$f \in \K[[x_1,\ldots,x_n]]$ into weighted homogeneous parts $f_i$ of
$w$-degree $i$. We denote the weighted $j$-jet of $f$ w.r.t.\@ $w$ by
\[
w \dash \jet(f, j) := \sum_{i = 0}^j f_i \,.
\]

\item
A power series in $\K[[x_1,\ldots,x_n]]$ has filtration $d \in \N$ if all its
terms are of weighted degree $d$ or higher. The power series of filtration $d$
form a vector space $E_d^w \subset \K[[x_1,\ldots,x_n]]$.
\end{enumerate}
\end{defn}

\begin{remark}
Note that $d < d'$ implies $E_{d'}^w \subseteq E_d^w$. Since the filtration of
the product $E_{d'}^w \cdot E_d^w$ is $d'+d$, it follows that $E_d^w$ is an
ideal in the ring of power series. We denote the ideal consisting of power
series of filtration strictly greater than $d$ by $E_{>d}^w$. If the weight
of each variable is $1$, we simply write $E_d$ and $E_{>d}$, respectively.
\end{remark}

There are also similar concepts for coordinate transformations:

\begin{defn}\label{def:jet_phi}
Let $\phi$ be a $\K$-algebra automorphism of $\K[[x_1,\ldots,x_n]]$ and let
$w$ be a (piecewise) weight on the variables.

\begin{enumerate}
\item
For $j > 0$ we define the weighted $j$-jet of $\phi$ w.r.t.\@ $w$, denoted by
$\phi_j^w$, to be the map given by
\[
\phi_j^w(x_i) := w\dash\jet(\phi(x_i),w\dash\deg(x_i)+j) \quad
\forall i = 1,\ldots,n \,.
\]
If the weight of each variable is $1$, i.e.\@ $w = (1, \ldots, 1)$, we simply
write $\phi_j$ for $\phi_j^w$.

\item\label{enum:filtration}
$\phi$ has filtration $d$ if, for all $\lambda \in \N$,
\[
(\phi-\id)E_\lambda^w \subset E_{\lambda+d}^w \,.
\]
\end{enumerate}
\end{defn}

\begin{remark}\label{rem:weighted_jet}
Let $\phi$ be a $\K$-algebra automorphism of $\K[[x_1,\ldots,x_n]]$.

\begin{enumerate}
\item
Note that $\phi_0(x_i) = \jet(\phi(x_i), 1)$ for all $i = 1, \ldots, n$.
Furthermore note that $\phi_0^w$ may have filtration less than or equal to $0$
for any weight $w$.

\item\label{enum:weighted_jet}
Let $w_0 = (w_1, \ldots, w_s) \in \left(\N^n\right)^s$ be a piecewise weight on
$(x_1, \ldots, x_n)$, let $f_0 \in \K[x_1,\ldots,x_n]$ be piecewise
quasihomogeneous of degree $d_0$ w.r.t.\@ $w_0$ and
$f_1 \in \K[x_1,\ldots,x_n]$ quasihomogeneous of degree $d_1$ w.r.t.\@
$w_1$. For any $\delta \geq 0$, we always have
$(\phi-\phi_\delta^{w_1})(f_1) \in E_{>d_1+\delta}^{w_1}$, but the analogon for
$w_0$ does not hold in general: To give a counterexample, let us consider the
case $n = s = 2$, $w_0 = ((1,4), (4,1))$, $f_0 = x_1 x_2$, and let $\phi$ be
given by $\phi(x_1) := x_1+x_2^2$, $\phi(x_2) := x_2+x_2^2$. Then $f_0$ is of
degree $d_0 = 5$, but $(\phi-\phi_0^{w_0})(f_0) = x_2^4$ is of degree $4$
w.r.t.\@ $w_0$ and thus not an element of $E_5^{w_0} = E_{d_0+0}^{w_0}$.
\end{enumerate}
\end{remark}

\section{Sufficient Sets of Transformations}\label{sec:sufficient_sets}

The results in this section considerably narrow down the transformations we
need to consider between specific unimodal normal forms of the same main type
in order to check if they are equivalent or not. In fact these results are in
many cases the main step for determining the structure of the equivalence
classes of the unimodal singularities up to corank $2$.

\begin{defn}
Let $f$ and $g$ be elements in $\C(a)[[x_1,\ldots,x_n]]$ and let $S$ be a
subset of $\Aut_{\C}(\C[[x_1,\ldots,x_n]])$. We call $S$ a sufficient set of
coordinate transformations for the pair $(f, g)$ if
\[
\forall u,v \in \C: \quad
\left(\T_{\C}(f(u),g(v)) \neq \varnothing
\;\Leftrightarrow\; \T_{\C}^S(f(u),g(v)) \neq \varnothing\right) \,.
\]
\end{defn}

The sufficient sets which we consider here can be described using the
following notation.

\begin{defn}
Let $M_x$ and $M_y$ be sets of monomials in $\C[[x,y]]$ and let $\C\! M_x$ and
$\C\! M_y$ be the $\C$-vector spaces spanned by these sets, i.e.\@
$\C\! M_x := \bigoplus_{m \in M_x} \C m$ and analogously for $\C\! M_y$. We
define the set of coordinate transformations spanned by $M_x$ and $M_y$ as
\[
\s(M_x, M_y) := \{ \phi \in \Aut_{\C}(\C[[x,y]]) \mid
\phi(x) \in \C\! M_x,\; \phi(y) \in \C\! M_y \} \,.
\]
\end{defn}

\begin{theorem}\label{thm:sufficient_sets}
Let $T$ be one of the main singularity types listed in
Table~\ref{tab:sufficient_sets}, let $S$ be the corresponding set of
automorphisms, and let $T_1$ and $T_2$ be subtypes of $T$. Then $S$ is a
sufficient set of coordinate transformations for
$\bigl(\NF(T_1(a)), \NF(T_2(a))\bigr)$.

\begin{table}[htb]
\centering
\caption{Sufficient sets for unimodal singularities of corank~2}
\label{tab:sufficient_sets}
\begin{tabular}{|c|c|c|c|}
\hline

& \multicolumn{2}{|c|}{$T$} & $S$ \\
\hline\hline

\multirow{2}{*}{\begin{sideways}P.\end{sideways}}

& \multicolumn{2}{|c|}{$X_9$}    & $\s(\{x, y\}, \{x, y\})$ \\ \cline{2-4}
& \multicolumn{2}{|c|}{$J_{10}$} & $\s(\{x, y^2\}, \{y\})$  \\ \hline

\multirow{4}{*}{\begin{sideways}Hyperbolic\end{sideways}}

& \multicolumn{2}{|c|}{$J_{10+k}$} & $\s(\{x\}, \{y\})$ \\ \cline{2-4}
& \multicolumn{2}{|c|}{$X_{9+k}$}  & $\s(\{x\}, \{y\})$ \\ \cline{2-4}

& \multirow{2}{*}{$Y_{r,s}$}
    & $r \neq s$ & $\s(\{x\}, \{y\})$                       \\ \cline{3-4}
&   & $r = s$    & $\s(\{x\}, \{y\}) \cup \s(\{y\}, \{x\})$ \\ \hline

\multirow{3}{*}{\begin{sideways}Except.\end{sideways}}

& \multicolumn{2}{|c|}{$E_{12}, E_{13}, E_{14}$} & $\s(\{x\}, \{y\})$
\\ \cline{2-4}
& \multicolumn{2}{|c|}{$Z_{11}, Z_{12}, Z_{13}$} & $\s(\{x\}, \{y\})$
\\ \cline{2-4}
& \multicolumn{2}{|c|}{$W_{12}, W_{13}$}         & $\s(\{x\}, \{y\})$
\\ \hline

\end{tabular}
\end{table}

\end{theorem}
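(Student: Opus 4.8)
The plan is to reduce, for each main type $T$ and each pair of subtypes $T_1, T_2$, an arbitrary automorphism $\phi$ with $\phi(\NF(T_1(u))) = \NF(T_2(v))$ to one lying in the claimed set $S$, by successively removing the "unwanted" monomials from $\phi(x)$ and $\phi(y)$ using weighted filtration arguments. The central tool is the following standard fact (a consequence of the definitions in Section~\ref{sec:weights} together with the finite determinacy of the normal forms): if $f_0$ is the piecewise quasihomogeneous leading part of $\NF(T_i(a))$ with respect to a suitably chosen (piecewise) weight $w$, and $\phi$ takes $\NF(T_1(u))$ to $\NF(T_2(v))$, then already the low-order jet $\phi_0^w$ must take $f_0$ to the corresponding leading part of $\NF(T_2(v))$, and any higher-order perturbation of $\phi$ that does not change this jet can be absorbed by composing with an automorphism of filtration $>0$ fixing $\NF(T_2(v))$. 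Thus it suffices to analyse the equation on the level of leading forms and then argue that the tail of $\phi$ is irrelevant.

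\textbf{Step 1 (choice of weights).} For each main type I would fix the weight $w$ (or piecewise weight $w_0$) that makes the Table~\ref{tab:normal_forms} normal form piecewise quasihomogeneous: e.g.\ $w = (1,1)$ for $X_9$ and $J_{10}$; $w = (s, r)$ (suitably normalised) for $Y_{r,s}$ with $r \neq s$, the piecewise weight $((s,r),(r,s))$-type symmetric choice for $r = s$; and for the hyperbolic $J_{10+k}, X_{9+k}$ and the exceptional series the weight under which the \emph{full} normal form (not just its lowest part) is quasihomogeneous, exploiting that these normal forms are themselves quasihomogeneous of a single weighted degree. In the quasihomogeneous cases this is what forces $\phi$ to respect the grading very rigidly.

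\textbf{Step 2 (constraining $\phi(x)$ and $\phi(y)$ monomial by monomial).} Write $\phi(x) = \sum c_\nu x^{\nu_1} y^{\nu_2}$, $\phi(y) = \sum d_\nu x^{\nu_1} y^{\nu_2}$. Using Step~1 and Remark~\ref{rem:weighted_jet}, the requirement that $\phi(\NF(T_1(u)))$ have the same $w$-leading form structure as $\NF(T_2(v))$ (in particular that it contain no monomials of $w$-degree below the critical one, and that it be quasihomogeneous there) forces all monomials in $\phi(x), \phi(y)$ outside the prescribed sets $M_x, M_y$ to drop out: either they would produce forbidden low-degree terms, or — by a Newton-polygon argument — they cannot contribute to the monomials actually present in the target while a transformation supported on $M_x, M_y$ already realises any admissible change. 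For $J_{10}$ one additionally uses that $y$ has a distinguished role (the $xy^4$ term), pinning $\phi(y) \in \C\{y\}$ and allowing $\phi(x)$ only an $x$- and a $y^2$-component. For $Y_{r,r}$ the extra summand $\s(\{y\},\{x\})$ appears precisely because the leading form $x^2y^2$ is symmetric in $x \leftrightarrow y$, so one must allow the swap.

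\textbf{Step 3 (disposing of higher-order tails).} Once $\phi(x), \phi(y)$ are known to have their leading (weighted) parts in $\C M_x, \C M_y$, the remaining higher-filtration corrections form an automorphism $\psi$ of filtration $>0$ with $\psi(\NF(T_2(v))) = \NF(T_2(v))$ after pre-composing; because the normal forms are finitely determined with respect to $w$, such $\psi$ can be chosen, and composing $\phi$ with its inverse yields an element of $S$ taking $\NF(T_1(u))$ to $\NF(T_2(v))$ exactly. This gives the nontrivial implication $\T_\C(\cdot,\cdot)\neq\varnothing \Rightarrow \T_\C^S(\cdot,\cdot)\neq\varnothing$; the converse is immediate since $S \subseteq \Aut_\C(\C[[x,y]])$.

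\textbf{Main obstacle.} The delicate point is Step~2 for the \emph{piecewise}-weighted cases, i.e.\ $X_9$ and $J_{10}$ where the normal form is only quasihomogeneous for the ordinary weight $(1,1)$ but we still need fine control: as the counterexample in Remark~\ref{rem:weighted_jet}(\ref{enum:weighted_jet}) shows, the clean statement $(\phi-\phi_\delta^{w_0})(f_0)\in E_{>d_0+\delta}^{w_0}$ fails for genuine piecewise weights, so one cannot naively truncate. I expect to handle this by working one ordinary weight at a time, or by a direct degree-by-degree elimination in the graded ring, carefully tracking which monomials of $\phi(x),\phi(y)$ can possibly feed into the finitely many monomials occurring in $\NF(T_2(v))$ — this bookkeeping, rather than any single hard idea, is where the real work lies.
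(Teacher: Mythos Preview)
Your proposal has the structural roles of the three families inverted, and this is not a cosmetic slip: it makes Step~1 false and Step~2 unworkable as written. The parabolic normal forms $X_9$ and $J_{10}$ are \emph{not} the piecewise-weighted cases; they are honestly quasihomogeneous for a single weight ($w=(1,1)$ for $X_9$, $w=(2,1)$ for $J_{10}$ --- note your $(1,1)$ for $J_{10}$ is wrong), and the paper handles them in two lines by observing that $\phi(f)=\phi_0^w(f)+R$ with $R\in E_{>d}^w$ and $\phi(f)$ quasihomogeneous of degree $d$, hence $R=0$. Conversely, the hyperbolic normal forms $J_{10+k}$, $X_{9+k}$, $Y_{r,s}$ are \emph{not} quasihomogeneous for any single weight (e.g.\ for $x^3\pm x^2y^2+ay^{6+k}$ the three monomials cannot share a weighted degree once $k>0$), and the exceptional normal forms are only semi-quasihomogeneous (e.g.\ for $E_{12}$ with $w=(7,3)$ one has $\deg(x^3)=\deg(y^7)=21$ but $\deg(xy^5)=22$). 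So the claim in Step~1 that ``these normal forms are themselves quasihomogeneous of a single weighted degree'' is simply false, and with it the hoped-for rigidity.

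Consequently the piecewise-weight difficulty you flag in your ``Main obstacle'' does arise, but for the \emph{hyperbolic} types, and the paper's actual work is exactly the bookkeeping you anticipate: for $J_{10+k}$ it takes the two face-weights $w_1,w_2$ of the Newton polygon, shows $\phi_0^{w_1},\phi_0^{w_2}\in\s(\{x\},\{y\})$ separately by explicit coefficient comparison, deduces $\phi_0^{w_0}=\phi_0^{w_1}=\phi_0^{w_2}$, and only then concludes $R_0\in E_{>d}^{w_1}\cap E_{>d}^{w_2}=E_{>d}^{w_0}$, circumventing the failure noted in Remark~\ref{rem:weighted_jet}(\ref{enum:weighted_jet}). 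Your Step~3 (absorbing the tail via a filtration-positive automorphism fixing the target, using finite determinacy) is a legitimate alternative to the paper's direct ``$R=0$ because the target has no higher terms'' argument, but it is heavier than needed and does not rescue the proposal unless Step~1 is repaired first.
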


\begin{proof}[Proof of Theorem~\ref{thm:sufficient_sets}]
We give different proofs for the parabolic, the hyperbolic, and the exceptional
cases as indicated in Table~\ref{tab:sufficient_sets}.

In each case, let $T_1$ and $T_2$ be subtypes of the same main type $T$, and
for $u \in \C$ let $\phi \in \Aut_{\C}(\C[[x,y]])$ be a coordinate
transformation which takes $f := \NF(T_1(u))$ to $\NF(T_2(v))$ for some
$v \in \C$.

\begin{description}[leftmargin=0cm,labelindent=\parindent,
before={\renewcommand\makelabel[1]{\normalfont\itshape ##1.}}]

\item[Parabolic cases]
The normal forms of both $X_9$ and $J_{10}$ are
quasihomogeneous with weights $w := (1,1)$ and $w := (2,1)$, respectively. Let
us first consider the case $T = X_9$. We have
\[
\phi(f) = \phi_0^w(f)+(\phi-\phi_0^w)(f) = \phi_0^w(f)+R
\]
with $R \in E_{>4}^w$. This implies $\phi(f) = \phi_0^w(f)$ because
$\phi(f) = \NF(T_2(v))$ is homogeneous of degree $4$ w.r.t.\@ the weight $w$.
So any possible value of $v$ which can be reached via some
$\phi \in \Aut_{\C}(\C[[x,y]])$ can also be obtained by
$\phi_0^w \in \s\left(\{x,y\}, \{x,y\}\right)$, i.e.\@,
$\s\left(\{x,y\}, \{x,y\}\right)$ is a sufficient set of coordinate
transformations for the pair $\left(\NF(T_1(a)), \NF(T_2(a))\right)$.

Let us now consider the case $T = J_{10}$.
Again we have $\phi(f) = \phi_0^w(f)$, but
in this case
$\phi_0^w$ is of the form
$\phi_0^w(x) = \alpha x + \beta y + \gamma y^2$,
$\phi_0^w(y) = \delta y$ with
$\alpha, \beta, \gamma, \delta \in \C$. Comparing the coefficients of
$\phi(f) = \NF(T_2(v))$ and
$\phi_0^w(f) = \beta^3 y^3 + (\text{other terms})$ yields
$\beta = 0$ and therefore $\phi_0^w \in \s(\{x, y^2\}, \{y\})$
as expected.

\item[Hyperbolic cases]
We present a proof for the main type $T = J_{10+k}$, the proofs for $X_{9+k}$
and $Y_{r,s}$ are similar. For $Y_{r,s}$ with $r = s$, we have to take the
special shape of $S$ into account, cf.\@ Table~\ref{tab:sufficient_sets}.

It does not matter for the arguments below whether we assume $T_1 = J_{10+k}^+$
or $T_1 = J_{10+k}^-$, the same holds for $T_2$. We write $\pm$ whenever the
sign can be either plus or minus in order to prove all cases at once.

The Newton polygon of $f = \NF(T_1(u)) = x^3 \pm x^2 y^2 + uy^{6+k}$ has two
faces defined by $f_1 := x^3 \pm x^2 y^2$ and $f_2 := \pm x^2 y^2 + uy^{6+k}$.
Let $w_0$ be the piecewise weight given by the two weights
$w_1 := (12+2k, 6+k)$ and $w_2 := (12+3k, 6)$. Then $f$ is piecewise
quasihomogeneous of degree $d := 36+6k$ w.r.t.\@ $w_0$.

We now proceed in three steps: In the first two, we show
$\phi_0^{w_1} \in \s(\{x\}, \{y\})$ and $\phi_0^{w_2} \in \s(\{x\}, \{y\})$.
Finally we conclude that $\phi(f)$ is equal to $\phi_0^{w_0}(f)$ and that
$\phi_0^{w_0}$ is an element of $\s(\{x\}, \{y\})$ which proves the claim.

The transformation $\phi_0^{w_1}$ is generically of the form
\begin{align*}
\phi_0^{w_1}(x) &= \alpha x + \beta_1 y + \beta_2 y^2 \,, \\
\phi_0^{w_1}(y) &= \gamma y
\end{align*}
with coefficients $\alpha, \beta_1, \beta_2, \gamma \in \C$. With these
notations we have
\begin{align*}
\phi(f)
&= \phi_0^{w_1}(f) + (\phi-\phi_0^{w_1})(f) \\
&= \beta_1^3 y^3 + (3\alpha\beta_2^2 \pm 2\alpha\beta_2\gamma^2) xy^4
+ (\beta_2^3 \pm \beta_2^2\gamma^2) y^6 + (\text{other terms})
\end{align*}
on the one hand and
\begin{align*}
\phi(f)
= \NF(T_1(v))
= x^3 \pm x^2 y^2 + vy^{6+k}
\end{align*}
on the other hand. This implies (in this order) $\beta_1 = 0$, $\alpha \neq 0$,
$\beta_2 = 0$ and hence $\phi_0^{w_1} \in \s(\{x\}, \{y\})$.

The second step is a proof by contradiction. Let $m$ be the largest integer
which is not greater than $\frac{k}{2}+2$. Then similar as above, the
automorphism $\phi_0^{w_2}$ is of the form
\begin{align*}
\phi_0^{w_2}(x)
&= \alpha x + \beta_1 y + \beta_2 y^2 + \ldots + \beta_m y^m \,, \\
\phi_0^{w_2}(y) &= \gamma y
\end{align*}
with $\alpha, \beta_1, \ldots, \beta_m, \gamma \in \C$ and
$\alpha, \gamma \neq 0$. We have already shown $\beta_1 = \beta_2 = 0$. Assume
$\beta_s \neq 0$ for some $s \in \{3, \ldots, m\}$ and let $s$ be minimal with
this property. Then the coefficient of $xy^{s+2}$ in
$\phi(f) = \phi_0^{w_2}(f) + (\phi-\phi_0^{w_2})(f)$ is
$\pm 2\alpha\beta_s\gamma^2$ which implies $\beta_s = 0$ in contradiction to
the assumption. Hence $\beta_3 = \ldots = \beta_m = 0$ and
$\phi_0^{w_2} \in \s(\{x\}, \{y\})$.

For the last step, we consider the following equations:
\begin{align*}
\phi(f) &= \phi_0^{w_1}(f)
+\myunderbrace{(\phi-\phi_0^{w_1})(f)}{=}{: R_1 \in E_{>d}^{w_1}} \\[.5ex]
\phi(f) &= \phi_0^{w_2}(f)
+\myunderbrace{(\phi-\phi_0^{w_2})(f)}{=}{: R_2 \in E_{>d}^{w_2}}
\displaybreak[0] \\[.5ex]
\phi(f) &= \phi_0^{w_0}(f)
+\myunderbrace{(\phi-\phi_0^{w_0})(f)}{=}{: R_0}
\end{align*}
Note that it is not a priori clear that $R_0$ lies in $E_{>d}^{w_0}$ if we only
consider these equations, cf.\@
Remark~\ref{rem:weighted_jet}(\ref{enum:weighted_jet}). Nevertheless, this can
be shown if we take into account the results of the two previous steps: By
definition of the piecewise weight $w_0$, any term in $\phi_0^{w_0}(x)$ also
appears in $\phi_0^{w_1}(x)$ or $\phi_0^{w_2}(x)$ (or both), analogously for
$\phi_0^{w_0}(y)$. Therefore we have $\phi_0^{w_0}(x) = \alpha x$ and
$\phi_0^{w_0}(y) = \gamma y$, hence
$\phi_0^{w_0} = \phi_0^{w_1} = \phi_0^{w_2}$ and
$\phi_0^{w_0} \in \s(\{x\}, \{y\})$. This implies
\[
R_0 = R_1 = R_2 \in E_{>d}^{w_1} \cap E_{>d}^{w_2} = E_{>d}^{w_0} \,.
\]
Since $\phi(f) = \NF(T_2(v))$ is piecewise quasihomogeneous of degree $d$
w.r.t.\@ $w_0$, we finally get $R_0 = 0$ and $\phi(f) = \phi_0^{w_0}(f)$. This
proves the claim.

\item[Exceptional cases]
The normal forms of all the exceptional cases in
Table~\ref{tab:sufficient_sets} are semi-quasihomogeneous polynomials, i.e., in
these cases $f = \NF(T_1(u))$ is of the form $f = f_0+f_1$ where $f_0$ is
quasihomogeneous of degree $d \in \N$ w.r.t.\@ some weight $w = (w_x, w_y)$,
$f_1$ has weighted degree $d+\delta > d$, and the Milnor number $\mu(f_0)$ of
$f_0$ is finite \citep[for the definition of the Milnor number, see][]{MS2015}.
In all the cases, $f_1$ consists of the term which contains the parameter and
we have
\begin{align*}
\phi(f) &= \phi_\delta^w(f_0) + \phi_0^w(f_1)
+ \left(\phi-\phi_\delta^w\right)(f_0) + \left(\phi-\phi_0^w\right)(f_1) \\
&= w\dash\jet(\phi_\delta^w(f_0), d+\delta) + \phi_0^w(f_1) + R
\end{align*}
with $R \in E_{>d+\delta}^w$. As above, $\phi(f) = \NF(T_2(v))$ implies
$R = 0$. If we show
\begin{align}
\phi_\delta^w \in \s\left(\{x\}, \{y\}\right) \,, \tag{$\ast$}
\end{align}
then it follows that $\phi_\delta^w$ is equal to $\phi_0^w$ and therefore
\begin{align*}
\phi(f)
&= w\dash\jet(\phi_0^w(f_0), d+\delta) + \phi_0^w(f_1) \\
&= \phi_0^w(f_0)+\phi_0^w(f_1) \\
&= \phi_\delta^w(f) \,.
\end{align*}
This, together with $(\ast)$, proves the claim.

The statement $(\ast)$ can be shown separately for each of the eight cases by
some easy computations. We carry out the proof for $W_{13}$, the other cases
follow similarly. The normal forms of the subtypes of $W_{13}$ are
$\pm x^4+xy^4+ay^6$, so in this case we have $w = (4,3)$, $d = 16$, and
$\delta = 2$. The $\pm$-sign does not matter for the computations which follow,
but we carry it along in order to prove all subcases at once. The
transformation $\phi_\delta^w$ is generically of the form
\begin{align*}
\phi_\delta^w(x) &= \alpha x + \beta y + \gamma y^2 \,, \\
\phi_\delta^w(y) &= \varepsilon x + \zeta y
\end{align*}
with $\alpha, \beta, \gamma, \varepsilon, \zeta \in \C$ because any other term
would raise the weighted degree by more than $\delta$. With these notations, we
now successively compare the coefficients of $\phi_\delta^w(f)$ and
$\phi(f) = \NF(T_2(v)) = \pm x^4+xy^4+vy^6$. The coefficient of $y^4$ in
$\phi_\delta^w(f)$ is $\pm \beta^4$, therefore we have $\beta = 0$. The
remaining coefficients of $xy^4$, $x^2y^3$, and $x^3y^2$ are now
$\alpha\zeta^4$, $4\alpha\varepsilon\zeta^3$, and
$\pm 4\alpha^3\gamma+6\alpha\varepsilon^2\zeta^2$, respectively, which shows
that (in this order) $\alpha\zeta \neq 0$, $\varepsilon = 0$, and $\gamma = 0$.
Hence $\phi_\delta^w$ is in fact of the form $\phi_\delta^w(x) = \alpha x$,
$\phi_\delta^w(y) = \zeta y$ which proves $(\ast)$ for
$T_1, T_2 \in \left\{W_{13}^+, W_{13}^-\right\}$.
\end{description}
\end{proof}

\section{On the Computation of the Results}\label{sec:computations}

Based on the previous section, the results presented in
Section~\ref{sec:results} can be computed using \Singular{} for all those
singularity types which are covered by Theorem~\ref{thm:sufficient_sets}. The
main tools for these computations are elimination, Gr\"obner covers, and
primary decomposition. For details on these topics, we refer to \citet{GP2008}.
For each pair of singularity subtypes~$T_1, T_2$, the computation follows the
same structure: One can first compute the set $P_1(T_1, T_2)$ using elimination
and factorization. The set $P_2(T_1, T_2)$ can then be derived from this as the
intersection of $P_1(T_1, T_2)$ with $\R \times \R$. In order to determine
$P_3(T_1, T_2)$, one finally has to check for each point or branch in
$P_2(T_1, T_2)$ whether or not there is a real transformation which changes the
parameter in such a way. Gr\"obner covers and primary decomposition are
convenient tools to simplify the often complicated ideals which occur in this
last step.

Although our approach is almost algorithmic, we do not present it as an
algorithm here because each case requires slightly different means depending on
the intermediate results. Especially the computation of $P_3(T_1, T_2)$ is
rather straightforward in some cases whereas it requires careful considerations
in other cases.

However, writing down every detail of the computations for each case is beyond
the scope of this section. Instead, we present the general framework and give
explicit \Singular{} commands for $T_1 = T_2 = X_9^{++}$ which is one of the
more complicated cases (cf. Theorem~\ref{thm:X9}).

The singularity type $\tY_r$ does not appear in Table~\ref{tab:sufficient_sets}
and thus needs special care. The structure of the equivalence classes of this
type can be computed on the basis of the data for the type $Y_{r,s}$, cf.\@
Section~\ref{ssec:Yr}.

\subsection{How to Compute
\texorpdfstring{$\boldsymbol{P_1(T_1, T_2)}$}{P1(T1, T2)}}%
\label{ssec:computing_P1}

We denote the parameter occurring in $\NF(T_1)$ by $a$ and the one in
$\NF(T_2)$ by $b$. The computation is done in four steps:

\begin{description}[font=\normalfont\itshape,leftmargin=0cm,
labelindent=\parindent,style=nextline]

\item[Step 1. Set up a generic transformation using
Theorem~\ref{thm:sufficient_sets}]
Let $S = \s(M_x, M_y) \subset \Aut_{\C}(\C[[x,y]])$ be the sufficient set of
$\C[[x,y]]$-automor\-phisms for $(\NF(T_1(a)), \NF(T_2(a)))$ given in
Theorem~\ref{thm:sufficient_sets}. Let $t_1, \ldots, t_r$ be coefficients for
the monomials in $M_x$ and $M_y$ and let $\phi$ be a generic element of $S$
with these coefficients, i.e.\@ let $\phi$ be of the form
$\phi(x) = t_1 \cdot x + (\text{other terms})$ (or of the form
$\phi(x) = t_1 \cdot y + (\text{other terms})$ in case $T_1$ and $T_2$ are
subtypes of $Y_{r,s}$ with $r = s$).

\item[Step 2. Set up a system of equations for the parameters]
By comparing the coefficients in $\phi(\NF(T_1(a)))$ and $\NF(T_2(b))$, we get
a set of equations in $a, b, t_1, \ldots, t_r$ which is equivalent to
$\phi(\NF(T_1(a))) = \NF(T_2(b))$. Let $I \subset \C[a,b,t_1,\ldots,t_r]$ be
the ideal generated by these equations. Then the vanishing set $V(I)$ describes
completely which transformations take $\NF(T_1(a))$ to $\NF(T_2(b))$ for which
values of $a$ and $b$.

\item[Step 3. Use elimination to obtain an equation in $a$ and $b$ only]
We can now eliminate the variables $t_1, \ldots, t_r$ from $I$ and thus obtain
an ideal $I' \subset \C[a,b]$ which is in all cases generated by one polynomial
$g$. This elimination geometrically corresponds to the projection
$\A_{\C}^{2+r} \supset V(I) \mapsto V(I') \subset \A_{\C}^2$.

\item[Step 4. Compute the zeros of this equation]
After factorizing $g \in \C[a,b]$ into irreducible factors $g_1, \ldots, g_s$,
we compute the roots in $b$ of each factor (over $\C(a)$ or suitable extensions
thereof if necessary). We thus get roots of the form $b-f(a)$ where $f(a)$ can
be considered as a function in $a$. These functions explicitly determine the
possible values of $b$ for each given $a$ and their joint graph is exactly
$P_1(T_1, T_2)$.
\end{description}

\begin{example}\label{ex:P1}
We compute $P_1\left(X_9^{++}, X_9^{++}\right)$ with Singular, following the
steps above.

\begin{description}[font=\normalfont\itshape,leftmargin=0cm,
labelindent=\parindent,style=nextline]

\item[Step 1. Set up a generic transformation using
Theorem~\ref{thm:sufficient_sets}]
For convenience we work over $\Q(a,b,t_1,t_2,t_3,t_4)[x,y]$:
\begin{verbatim}
> ring R = (0,a,b,t1,t2,t3,t4), (x,y), dp;
> poly f = x^4+a*x^2*y^2+y^4;
\end{verbatim}
According to Theorem~\ref{thm:sufficient_sets},
\[
S = \bigl\{ \phi \in \Aut_{\C}(\C[[x,y]])
\mid \phi(x) = t_1 x + t_2 y,\; \phi(y) = t_3 x + t_4 y,\;
t_1, \ldots, t_4 \in \C \bigr\}
\]
is a sufficient set of automorphisms for $X_9$:
\begin{verbatim}
> map phi = R, t1*x+t2*y, t3*x+t4*y;
\end{verbatim}

\item[Step 2. Set up a system of equations for the parameters]
\mbox{}\vspace{-\baselineskip}\vspace{-\topsep}
\begin{verbatim}
> matrix C = coef(phi(f), xy);
> print(C);
x^4,   x^3*y, x^2*y^2,x*y^3, y^4,  
C[2,1],C[2,2],C[2,3], C[2,4],C[2,5]
> C[2,1];
(a*t1^2*t3^2+t1^4+t3^4)
\end{verbatim}
Now the second row of the matrix \verb+C+ contains the coefficients of
$\verb+phi+\left(X_9^{++}(a)\right)$, \verb+C[2, 1]+ for instance is the one
belonging to $x^4$. Using the corresponding coefficients of
$X_9^{++}(b) = x^4 + b \cdot x^2 y^2 + y^4$, we can define the ideal $I$ as
above:
\begin{verbatim}
> matrix D[1][5] = 1, 0, b, 0, 1;
> ideal I = C[2,1..5]-D[1,1..5];
\end{verbatim}

\item[Step 3. Use elimination to obtain an equation in $a$ and $b$ only]
As the next step, we map this ideal to $\Q(a)[b,t_1,t_2,t_3,t_4]$ and eliminate
the variables~$t_i$:
\begin{verbatim}
> ring S = (0,a), (b,t1,t2,t3,t4), dp;
> ideal I = imap(R, I);
> ideal g = eliminate(I, t1*t2*t3*t4);
> g;
g[1]=(a^4-8*a^2+16)*b^6+(-a^6-720*a^2-1152)*b^4
+(8*a^6+720*a^4+20736)*b^2+(-16*a^6+1152*a^4-20736*a^2)
\end{verbatim}

\item[Step 4. Compute the zeros of this equation]
Factorizing the single generator of this ideal finally yields the functions
$f_1^{1,1}, \ldots, f_6^{1,1}$ defined in Theorem~\ref{thm:X9}. Note that
$a^2 \neq 4$.
\begin{verbatim}
> factorize(g[1]);
[1]:
   _[1]=1
   _[2]=b+(-a)
   _[3]=b+(a)
   _[4]=(a-2)*b+(-2*a-12)
   _[5]=(a+2)*b+(-2*a+12)
   _[6]=(a+2)*b+(2*a-12)
   _[7]=(a-2)*b+(2*a+12)
[2]:
   1,1,1,1,1,1,1
\end{verbatim}
\end{description}
\end{example}

\subsection{How to Compute
\texorpdfstring{$\boldsymbol{P_2(T_1, T_2)}$}{P2(T1, T2)}}%
\label{ssec:computing_P2}

Given $P_1(T_1, T_2)$, it is easy to compute $P_2(T_1, T_2)$ even ``by hand''
because we have
\[
P_2(T_1, T_2) = P_1(T_1, T_2) \cap (\R \times \R) \,.
\]

\begin{example}\label{ex:P2}
Continuing the example above, the values of
$f_1^{1,1}(a), \ldots, f_6^{1,1}(a)$ are clearly real for $a \in \R$, cf.\@
Theorem~\ref{thm:X9}. The set $P_2\left(X_9^{++}, X_9^{++}\right)$ is thus the
joint graph of these functions over $\R \setminus \{-2, 2\}$.

To give another example, for $T_1 = T_2 = X_9^{+-}$ the set $P_1(T_1, T_2)$ is
the joint graph of $f_1^{i,i}, \ldots, f_6^{i,i}$ over
$\C \setminus \{-2i, 2i\}$. The values of $f_1^{i,i}(a)$ and $f_2^{i,i}(a)$ are
clearly real for $a \in \R$, but those of $f_3^{i,i}(a), \ldots, f_6^{i,i}(a)$
are not except at some exceptional points which are already covered by
$f_1^{i,i}$ and $f_2^{i,i}$. So in this case we have
\[
P_2\left(X_9^{+-}, X_9^{+-}\right)
= \Gamma_{\R} \left(f_1^{i,i}, f_2^{i,i}\right)
= \Gamma_{\R} \left(f_1^{1,1}, f_2^{1,1}\right) \,.
\]
\end{example}

\subsection{How to Compute
\texorpdfstring{$\boldsymbol{P_3(T_1, T_2)}$}{P3(T1, T2)}}%
\label{ssec:computing_P3}

We do the computation in two steps:

\begin{description}[font=\normalfont\itshape,leftmargin=0cm,
labelindent=\parindent,style=nextline]

\item[Step 1. Reduce the problem to a finite number of branches and exceptional
points]
Since $P_3(T_1, T_2) \subset P_2(T_1, T_2)$ by definition, we can determine
$P_3(T_1, T_2)$ by checking for each pair $(a,b) \in P_2(T_1, T_2)$ whether or
not there is a \emph{real} coordinate transformation
$\phi \in \Aut_{\R}(\R[[x,y]])$ which takes $\NF(T_1(a))$ to $\NF(T_2(b))$.
This can be reduced to a finite problem as follows: Let $g_j$, $j \in
\{1,\ldots,s\}$, be the irreducible factors of the polynomial $g$ as in
Section~\ref{ssec:computing_P1}. Then in all the cases, $P_2(T_1, T_2)$ is a
finite union of ``branches'' of the form $V(g_j)$ and some exceptional points.
We can check whether a branch $V(g_j)$ or an exceptional point $(q_a, q_b)$ in
$P_2(T_1, T_2)$ belongs $P_3(T_1, T_2)$ by simply adding appropriate relations
to the ideal~$I$ and looking at the real solutions of the resulting ideal. In
other words, we define $J := I+\langle g_j \rangle$ or
$J := I+\langle a-q_a, b-q_b \rangle$, respectively, and investigate
$V_{\R}(J)$. Note that we have $I \subset \R[a,b,t_1,\ldots,t_r]$ and
$g_j \in \R[a,b]$ and thus $J \subset \R[a,b,t_1,\ldots,t_r]$ in all the cases.

\item[Step 2. For each branch and each exceptional point, check if a real
transformation exists]
$P_3(T_1, T_2)$ is the image of $V_{\R}(J) \subset \A_{\R}^{2+r}$ under the
projection $\A_{\R}^{2+r} \rightarrow \A_{\R}^2$, i.e.\@ we have
$(p_a, p_b) \in P_3(T_1, T_2)$ if and only if there is a coordinate
transformation with real coefficients $(p_{t_1}, \ldots, p_{t_r})$ such that
$(p_a, p_b, p_{t_1}, \ldots, p_{t_r})$ is an element of
$V_{\R}(J) \subset \A_{\R}^{2+r}$.

It turns out that the ideal $J$ is quite complicated in some cases and that it
can be difficult to determine $V_{\R}(J)$ by just computing a Gr\"obner basis
of $J$. One way out is then to consider $J$ as a parametric ideal
$J \subset \R(a)[b,t_1,\ldots,t_r]$ and to compute a Gr\"obner cover thereof
by using the \Singular{} library \verb+grobcov.lib+ \citep{grobcov}.
A~Gr\"obner cover completely describes the possible shapes of Gr\"obner bases
of $J$ for different values of $a$. It contains a generic Gr\"obner basis of
$J$, i.e.\@ one which is a Gr\"obner basis except for finitely many exceptional
values of $a$, and additionally Gr\"obner bases of $J$ for each of these
exceptional values. The ideals in a Gr\"obner cover of $J$ typically have a
much easier structure than $J$ itself. We can thus treat them one by one and
determine their real solutions. We will often find generators such as
$(t_j)^4+1$, indicating that the vanishing set over $\R$ of this ideal is
empty.

If any of the ideals in the Gr\"obner cover of $J$ are still to complicated and
if their vanishing set over $\R$ cannot be easily read off, another trick is to
compute a primary decomposition of these ideals with the \Singular{} library
\verb+primdec.lib+ \citep{primdec}. Typically, it is then easy to see that some
of the primary components have no solutions over $\R$ whereas the real
solutions of the remaining components can be easily determined.
\end{description}

\begin{example}
\leavevmode

\begin{description}[font=\normalfont\itshape,leftmargin=0cm,
labelindent=\parindent,style=nextline]

\item[Step 1. Reduce the problem to a finite number of branches and exceptional
points]
We have already seen in Example~\ref{ex:P2} that
$P_2\left(X_9^{++}, X_9^{++}\right)$ is the joint graph of
$f_1^{1,1}, \ldots,\allowbreak f_6^{1,1}$ over $\R \setminus \{-2, 2\}$. We now
have to check for each of these functions whether their graph is also contained
in $P_3\left(X_9^{++}, X_9^{++}\right)$.

\item[Step 2a. Check if a real transformation exists for $f_3^{1,1}$]
This is clearly the case for $f_1^{1,1} = \id$. To check this for $f_3^{1,1}$,
we continue the \Singular{} session from Example~\ref{ex:P1}, add the
corresponding relation to the ideal $I$ and compute a Gr\"obner cover of the
resulting ideal $J$:
\begin{verbatim}
> ideal J = I, (a-2)*b+(-2*a-12);
> LIB "grobcov.lib";
> grobcov(J);
\end{verbatim}
The output of the last command is too long to be printed here. We will find
that the Gr\"obner basis of $J$ for generic $a$ contains the generators
$(t_2)^2+(t_4)^2$ and $(t_3)^2+(t_4)^2$ which imply $t_2 = t_3 = t_4 = 0$ for
any real solution of this ideal. But this is a contradiction to
$\verb+phi+ \in \Aut_{\R}(\R[[x,y]])$. The exceptional cases for the
parameter~$a$ are $a+2=0$, $a-2=0$, $a^2+12=0$, $a+6=0$, $a-6=0$, and $a=0$.
The first two cases are excluded by the definition of the singularity type
$X_9^{++}$, $a^2+12=0$ would imply $a \not\in \R$, for $a+6=0$ and $a=0$ the
corresponding Gr\"obner bases of $J$ contain generators similar to those
mentioned above, and finally $a-6=0$ implies $b-6=0$ such that this case is
already covered by the graph of $f_1^{1,1}$.

\item[Step 2b. Check if a real transformation exists for $f_5^{1,1}$]
To give one more example, let us consider $f_5^{1,1}$:
\begin{verbatim}
> J = I, (a+2)*b+(2*a-12);
> grobcov(J);
\end{verbatim}
The crucial generator of the Gr\"obner basis of $J$ for generic $a$ is now the
polynomial $(a+2)(t_4)^4-1$ which has a real root if and only if $a > -2$.
Considering the other generators, it is easy to see that given $t_4 \in \R$,
$t_1 = t_2 = t_3 = -t_4$ is a real solution. The exceptional values
of $a$ in this case are the same as above and again, we do not have to consider
$a+2=0$, $a-2=0$, and $a^2+12=0$. The relation $a+6=0$ implies $b+6=0$ which is
already covered by $f_1^{1,1}$. Finally,
$t_1 = t_2 = t_3 = \frac{1}{\sqrt[4]{2}}$, $t_4 = -\frac{1}{\sqrt[4]{2}}$ and
$t_1 = t_2 = t_3 = \frac{1}{\sqrt[4]{8}}$, $t_4 = -\frac{1}{\sqrt[4]{8}}$ are
real solutions for the cases $a=0$ and $a-6=0$, respectively. To sum up,
the graph of $f_5^{1,1}$ over $\R^{>-2}$ belongs to
$P_3\left(X_9^{++}, X_9^{++}\right)$, but not the part over $\R^{<-2}$.

\item[Step 2c. Check if a real transformation exists for the other branches]
Continuing in this manner, one can show that $f_2^{1,1}$, $f_4^{1,1}$ and
$f_6^{1,1}$ do not contribute any additional points, so we get
\[
P_3\left(X_9^{++}, X_9^{++}\right) = \Gamma_{\R'} \left(f_1^{1,1}\right)
\cup \Gamma_{\R^{>-2}} \left(f_5^{1,1}\right)
\]
where $\R' := \R \setminus \{-2, 2\}$.
\end{description}
\end{example}

\begin{remark}
With the above notations, the irreducible factors $g_j$, $j = 1, \ldots, s$, of
the polynomial $g$ are luckily of degree $1$ in $b$ in almost all cases. If one
of those factors, say $g_1$, has degree in $b$ greater than $1$, and if
additionally the corresponding ideal $J = I + \langle g_1 \rangle$ has both
real and complex solutions, then an extra calculation is needed: Let
$f_1(a), \ldots, f_k(a)$ be the roots of $g_1$ in $b$ as above, i.e.\@
$g_1 = (b-f_1(a)) \ldots (b-f_k(a))$ (over $\C(a)$ or over a suitable extension
thereof if necessary). Then we have to check which of these roots
$f_1(a), \ldots, f_k(a)$ belong to the real solutions of $J$ and which of them
can only reached via complex transformations.

This is especially crucial for the singularities of type $J_{10}$ in order to
distinguish between $f_3^{\sigma,\rho}$, $f_4^{\sigma,\rho}$,
$f_5^{\sigma,\rho}$, and $f_6^{\sigma,\rho}$, cf.\@ Theorem~\ref{thm:J10}.
\end{remark}

\begin{remark}
The hyperbolic singularity types listed in Table~\ref{tab:normal_forms} are
actually infinite series of types. One might argue that the computations
described in Sections~\ref{ssec:computing_P1} to \ref{ssec:computing_P3} must
be carried out for each single $k > 0$ (for $J_{10+k}$ and $X_{9+k}$) and for
each pair $r,s > 4$ (for $Y_{r,s}$) in order to check the results presented in
Theorems~\ref{thm:J10+k} to \ref{thm:Yrs}. This is, of course, impossible in
practice. But it turns out that the results are periodic in $k$ and $r,s$,
respectively. Hence it suffices to carry these computations out for
sufficiently many values of $k$ and $r,s$. If we closely examine the
intermediate steps, then we can easily check that the results are indeed
periodic.
\end{remark}

\subsection{The Special Type \texorpdfstring{$\boldsymbol{\tY_r}$}{Yr}}%
\label{ssec:Yr}

Theorem~\ref{thm:sufficient_sets} does not give any sufficient set for subtypes
of $\tY_r$ and indeed it turns out that there is no degree-bounded sufficient
set for this case, cf.\@ Remark~\ref{rem:sufficient_sets_for_Yr}.

But since $\tY_r$ is $\C$-equivalent to $Y_{r,r}$, we can use the structure of
the equivalence classes of $Y_{r,r}$ (cf.\@ Theorem~\ref{thm:Yrs}) to determine
$P_1(T_1, T_2)$, $P_2(T_1, T_2)$, and $P_3(T_1, T_2)$ for
$T_1, T_2 \in \bigl\{\tY_r^+, \tY_r^-\bigr\}$. To do so, let us first define
the principal part of a power series.

\begin{defn}
Let $f \in \K[[x_1,\ldots,x_n]]$ be a power series, let $\Gamma_f$ be its
Newton polygon, and let $f_0$ be the sum of those terms of $f$ which lie on
$\Gamma_f$. Then we call $f_0$ the principal part of $f$.
\end{defn}

The following result is due to \citet[Corollary~9.9]{A1974}.

\begin{lemma}\label{lem:principalpart}
Let $f \in \C[[x,y]]$ be a power series whose principal part is of the form
$f_0 = x^a+\lambda x^2y^2+y^b$, where $0 \neq \lambda \in \C$, $a \geq 4$, and
$b \geq 5$. Then $f$ and its principal part $f_0$ are $\C$-equivalent, i.e.\@
$f \csim f_0$.
\end{lemma}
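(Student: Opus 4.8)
The statement is a finite-determinacy result relative to the Newton filtration of $f_0$, so the plan is to fix that filtration, observe that $f$ can differ from $f_0$ only by terms lying strictly above the Newton polygon, and then remove those terms by an explicit homotopy of coordinate changes. Since $\lambda \neq 0$ and $a \geq 4$, $b \geq 5$ (so that $(2,2)$ is a vertex of $\Gamma_{f_0}$), the Newton polygon of $f_0$ has exactly the two compact faces spanned by $\{x^a, x^2y^2\}$ and by $\{x^2y^2, y^b\}$. Let $w_1, w_2$ be the corresponding primitive inner normals, rescaled to a common value so that $f_0$ is piecewise quasihomogeneous of degree $d$ w.r.t.\@ the piecewise weight $w_0 := (w_1, w_2)$. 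Because $f$ and $f_0$ carry the same terms on $\Gamma_{f_0}$, we have $f = f_0 + g$ with $g \in E_{>d}^{w_0}$, and it suffices to show $f_0 + g \csim f_0$ for every such $g$.

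First I would record that $f_0$ has an isolated critical point at the origin. It is convenient (its Newton polygon meets both axes, in $x^a$ and $y^b$) and Newton nondegenerate: the two face polynomials $x^2\bigl(x^{a-2}+\lambda y^2\bigr)$ and $y^2\bigl(\lambda x^2+y^{b-2}\bigr)$ have no singular point off the coordinate axes, since $\partial_y\bigl(x^a+\lambda x^2y^2\bigr) = 2\lambda x^2 y$ and $\partial_x\bigl(\lambda x^2y^2+y^b\bigr) = 2\lambda xy^2$ never vanish on $(\C\setminus\{0\})^2$; this is exactly where $\lambda \neq 0$, $a\geq4$, $b\geq5$ enter. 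Hence $\mu(f_0) < \infty$, and likewise $\mu(f_t) < \infty$ for every member of the homotopy $f_t := f_0 + tg$, $t \in [0,1]$, as each $f_t$ has principal part $f_0$.

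The core step is a homotopy argument of Thom--Levine type, adapted to the weight $w_0$. I would construct a $t$-dependent vector field $X_t = v_1(t)\,\partial_x + v_2(t)\,\partial_y$, with $v_1(t), v_2(t) \in \m$ of strictly positive $w_0$-filtration, solving
\[
g + v_1(t)\,\partial_x f_t + v_2(t)\,\partial_y f_t = 0 \qquad (0 \leq t \leq 1) \,.
\]
Its formal flow $\phi_t$ is then a well-defined element of $\Aut_{\C}(\C[[x,y]])$ with $\phi_0 = \id$, and $\tfrac{d}{dt}\bigl(\phi_t(f_t)\bigr) = \phi_t\bigl(g + v_1\,\partial_x f_t + v_2\,\partial_y f_t\bigr) = 0$, so $\phi_1(f) = \phi_1(f_1) = \phi_0(f_0) = f_0$, i.e.\@ $f \csim f_0$. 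Building the $v_i(t)$ means solving $g = -v_1\,\partial_x f_t - v_2\,\partial_y f_t$ recursively, one $w_0$-filtration layer at a time (the lower-filtration corrections coming from $\partial g$ never interfere with the layer currently under consideration, in the style of the filtration arguments in the proof of Theorem~\ref{thm:sufficient_sets}); this reduces the construction to the single algebraic input
\[
E_{>d}^{w_0} \;\subseteq\; \Bigl\langle \tfrac{\partial f_0}{\partial x},\; \tfrac{\partial f_0}{\partial y}\Bigr\rangle
\]
together with a bounded gain in $w_0$-filtration when one divides.

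The hard part will be this inclusion, and it is where the precise shape of $f_0$ matters: it says that the Milnor algebra $\C[[x,y]]\big/\bigl\langle\partial_x f_0,\partial_y f_0\bigr\rangle$ is concentrated in $w_0$-degrees $\leq d$, i.e.\@ that the single modulus of these unimodal germs sits in $w_0$-degree exactly $d$ with nothing above it. I would establish it by exhibiting an explicit monomial basis of the Milnor algebra: reducing an arbitrary monomial modulo the (piecewise quasihomogeneous) leading parts of $\partial_x f_0$ and $\partial_y f_0$ on the two Newton faces -- which is possible precisely because $\lambda \neq 0$ makes those leading parts nondegenerate -- leaves a staircase with exactly $\mu(f_0)$ lattice points, each of $w_0$-degree at most $d$. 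This is in essence the computation behind \citet[Corollary~9.9]{A1974}, to which we refer for the remaining details.
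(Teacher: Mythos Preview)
The paper does not give its own proof of this lemma; it simply attributes the result to \citet[Corollary~9.9]{A1974} and uses it as a black box. Your proposal, by contrast, sketches the actual argument behind Arnold's corollary: the Newton filtration associated to the two faces of $\Gamma_{f_0}$, the nondegeneracy check (which is exactly where $\lambda \neq 0$, $a \geq 4$, $b \geq 5$ are used), and the Thom--Levine style homotopy reducing everything to the inclusion $E_{>d}^{w_0} \subseteq \m \cdot j(f_0)$ with controlled filtration gain. This is precisely the mechanism Arnold uses, so your approach is not different from the paper's---you have simply unpacked what the paper leaves as a citation.

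One small point worth tightening: the phrase ``the Milnor algebra is concentrated in $w_0$-degrees $\leq d$'' is a bit stronger than what the homotopy argument actually needs. What is required is that every monomial of $w_0$-degree $> d$ lies in $\m \cdot j(f_0)$ with the coefficients $v_i$ having strictly positive $w_0$-filtration (so that the inductive removal of higher-order terms converges); this is implied by, but not equivalent to, your statement about where a monomial basis of the Milnor algebra sits. You do flag this (``together with a bounded gain in $w_0$-filtration when one divides''), so the sketch is sound, but in a full write-up that is the inequality to state and verify. Since you close by deferring the computation to Arnold, your proposal is in the end doing exactly what the paper does, only with more context supplied.
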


Based upon this lemma, we can now specify an explicit equivalence between the
normal forms of $\tY_r$ and $Y_{r,r}$.

\begin{lemma}\label{lem:Yr_equivalences}
For any $r > 4$ and any $a \in \C \setminus \{0\}$, we have
\[
\left(a, \left(\textstyle\frac{1}{4}\right)^r a^2\right)
\in P_1\left(\tY_r^+, Y_{r,r}^{++}\right)
\cap P_1\left(\tY_r^-, Y_{r,r}^{-+}\right) \,.
\]
\end{lemma}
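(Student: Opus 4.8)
The plan is to produce an explicit complex coordinate transformation that takes $\NF(\tY_r^\pm(a)) = \pm(x^2+y^2)^2 + ax^r$ to $\NF(Y_{r,r}^{\ast\ast}((\tfrac14)^r a^2))$ and then to invoke Lemma~\ref{lem:principalpart} to absorb the higher-order terms that the transformation introduces. First I would factor the quadratic form: over $\C$ we have $(x^2+y^2)^2 = (x+iy)^2(x-iy)^2$, so the natural substitution is the linear change of coordinates $u := x+iy$, $v := x-iy$, equivalently $x = \tfrac12(u+v)$, $y = \tfrac{1}{2i}(u-v)$. Under this map the leading term $\pm(x^2+y^2)^2$ becomes $\pm u^2v^2$, which already has the shape of the $x^2y^2$-term in the normal form of $Y_{r,r}$. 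The price is that the monomial $ax^r$ turns into $a\bigl(\tfrac12(u+v)\bigr)^r = \tfrac{a}{2^r}(u+v)^r = \tfrac{a}{2^r}\sum_{j=0}^r \binom{r}{j} u^j v^{r-j}$, a full binomial expansion rather than a clean $u^r + v^r$.

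The second step is to handle this expansion. The two extreme terms of the binomial sum are $\tfrac{a}{2^r}u^r$ and $\tfrac{a}{2^r}v^r$, and these are exactly the terms lying on the Newton polygon together with $\pm u^2v^2$ (for $r > 4$ all mixed terms $u^j v^{r-j}$ with $1 \le j \le r-1$ lie strictly above the Newton polygon determined by $u^2v^2$, $u^r$, $v^r$). Thus the image power series has principal part $\pm u^2 v^2 + \tfrac{a}{2^r}u^r + \tfrac{a}{2^r}v^r$. To normalize the coefficients of $u^r$ and $v^r$ to $1$ on one side (so as to match $\NF(Y_{r,r})$, whose normal form is $x^2y^2 + x^r + by^s$ with $a \ne 0$), I would rescale: replace $u$ by $\lambda u$ and $v$ by $\lambda v$ for a suitable $\lambda \in \C$. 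Under $u \mapsto \lambda u$, $v \mapsto \lambda v$, the term $u^2v^2$ scales by $\lambda^4$ and $u^r$, $v^r$ scale by $\lambda^r$; choosing $\lambda$ so that $\pm\lambda^4 = 1$ keeps the $\pm u^2v^2$ term intact (up to the sign bookkeeping distinguishing $Y_{r,r}^{++}$ from $Y_{r,r}^{-+}$), and then the common coefficient of $u^r$ and $v^r$ becomes $\tfrac{a}{2^r}\lambda^r = \tfrac{a}{2^r}(\lambda^4)^{r/4}\cdots$; tracking the exponents carefully (this is where the factor $(\tfrac14)^r a^2$ has to come out, so one has to be a little careful about whether one normalizes the $u^r$-coefficient to $1$ and reads off the $v^r$-coefficient, or symmetrizes) yields that the surviving parameter value is $b = (\tfrac14)^r a^2$. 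Concretely, after the linear change and the rescaling, the principal part is $\C$-equivalent to $x^2y^2 + x^r + (\tfrac14)^r a^2\, y^r$, which is $\NF(Y_{r,r}^{++}((\tfrac14)^r a^2))$ in the $+$ case and $\NF(Y_{r,r}^{-+}((\tfrac14)^r a^2))$ in the $-$ case, matching the signs claimed.

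The third and final step is to remove the off-polygon mixed binomial terms $\tfrac{a}{2^r}\binom{r}{j}u^jv^{r-j}$, $1 \le j \le r-1$: since the whole transformed power series has principal part of the form $\pm u^2v^2 + \mu u^r + \mu v^r$ with $\mu = (\tfrac14)^r a^2 \ne 0$ (here we use $a \ne 0$), and $r \ge 5 > 4$, Lemma~\ref{lem:principalpart} applies (after possibly absorbing the sign into $x \mapsto$ something, or noting the lemma as stated covers $f_0 = x^a + \lambda x^2y^2 + y^b$; the $-u^2v^2$ case reduces to this by $y \mapsto iy$, which also fixes the signs of the $x^r$, $y^r$ terms suitably, and one checks the sign bookkeeping threads through to give exactly the $Y_{r,r}^{-+}$ subtype). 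Hence the transformed series is $\C$-equivalent to its principal part, which we have identified as the desired normal form. Composing the linear change of coordinates, the rescaling, and the transformation supplied by Lemma~\ref{lem:principalpart} gives an element of $\T_\C(\NF(\tY_r^\pm(a)), \NF(Y_{r,r}^{\ast\ast}((\tfrac14)^r a^2)))$, which is precisely the assertion $(a,(\tfrac14)^r a^2) \in P_1(\tY_r^\pm, Y_{r,r}^{\ast\ast})$.

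I expect the main obstacle to be \emph{the sign bookkeeping}: one must verify that the $+$ normal form of $\tY_r$ lands in $Y_{r,r}^{++}$ and the $-$ normal form in $Y_{r,r}^{-+}$ (rather than, say, $Y_{r,r}^{+-}$ or $Y_{r,r}^{-+}$ getting swapped), which requires tracking how the factors $i$ and $-1$ introduced by $u = x+iy$, $y \mapsto iy$, and the fourth-root rescaling $\lambda$ interact with the sign in front of $u^2v^2$ and with the coefficients of $u^r$ and $v^r$. The computation itself — the binomial expansion, the Newton-polygon check that mixed terms lie above the polygon, and the exponent count producing $(\tfrac14)^r$ — is routine; the delicate point is confirming the exact superscripts on $Y_{r,r}$ in the claim.
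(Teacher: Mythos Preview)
Your approach is essentially the paper's: apply the linear change turning $(x^2+y^2)^2$ into $u^2v^2$, read off the principal part, and invoke Lemma~\ref{lem:principalpart}; the paper is even terser, simply recording the principal part $(\tfrac12)^r a\,x^r + \lambda x^2y^2 + (\tfrac12)^r a\,y^r$ and saying the result follows. One point to fix in your writeup: the \emph{uniform} scaling $u\mapsto\lambda u$, $v\mapsto\lambda v$ you first describe cannot change the ratio of the $u^r$- and $v^r$-coefficients and hence can never produce the square $a^2$ --- what is needed (and what your parenthetical correctly anticipates) is the independent scaling $u\mapsto\mu u$, $v\mapsto\mu^{-1}v$, which preserves $u^2v^2$ and, with $\mu^r = 2^r/a$, sends the coefficient pair $\bigl(\tfrac{a}{2^r},\tfrac{a}{2^r}\bigr)$ to $\bigl(1,(\tfrac14)^r a^2\bigr)$.
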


\begin{proof}
Let $\phi \in \Aut_{\C}(\C[[x,y]])$ be the coordinate transformation defined by
$\phi(x) := \frac{1}{2}(x+y)$ and $\phi(y) := \frac{1}{2}i(x-y)$. Then the
principal parts of $\phi\bigl(\NF\bigl(\tY_r^+(a)\bigr)\bigr)$ and
$\phi\bigl(\NF\bigl(\tY_r^-(a)\bigr)\bigr)$ are of the form
$\left(\frac{1}{2}\right)^r a \cdot x^r + \lambda x^2 y^2
+ \left(\frac{1}{2}\right)^r a \cdot y^r$
with $\lambda = 1$ and $\lambda = -1$, respectively, so the result follows from
Lemma~\ref{lem:principalpart}.
\end{proof}

Section~\ref{ssec:computing_P1} tells us how to compute $P_1(T_1, T_2)$ for
$T_1, T_2 \in \{Y_{r,r}^{++}, Y_{r,r}^{-+}\}$, cf.\@ Theorem~\ref{thm:Yrs}. We
can use this data and the above lemma to compute $P_1(T_1, T_2)$ for
$T_1, T_2 \in \bigl\{\tY_r^+, \tY_r^-\bigr\}$. Let us consider the case
$P_1\bigl(\tY_r^+, \tY_r^+\bigr)$, the other cases follow similarly. According
to Lemma~\ref{lem:Yr_equivalences}, $\NF\bigl(\tY_r^+(a)\bigr)$ is
$\C$-equivalent to $\NF\bigl(Y_{r,r}^{++}\left(ca^2\right)\bigr)$ with
$c := \left(\frac{1}{4}\right)^r$ for any $r > 4$ and any
$a \in \C \setminus \{0\}$. This in turn is $\C$-equivalent to
$\NF\bigl(Y_{r,r}^{++}\left(\zeta ca^2\right)\bigr)$ for any $\zeta$ satisfying
$\zeta^l-1 = 0$ where $l = \gcd(2, r+1)$, cf.\@ Theorem~\ref{thm:Yrs}. Applying
Lemma~\ref{lem:Yr_equivalences} again leads to
$\NF\bigl(Y_{r,r}^{++}\left(\zeta ca^2\right)\bigr)
\csim \NF\bigl(\tY_r^+\left(\pm \sqrt{\zeta}a\right)\bigr)$,
and we thus get the diagram shown in Figure~\ref{fig:Yr}.

\begin{figure}
\caption{Equivalences between $\NF\bigl(\tY_r^+\bigr)$ and %
$\NF\left(Y_{r,r}^{++}\right)$\quad%
$\left(c := \left(\frac{1}{4}\right)^r\right)$}%
\label{fig:Yr}
\begin{center}
\begin{tikzpicture}
\matrix (m) [matrix of math nodes, row sep=1.5ex, column sep=2ex]
{
\NF\left(\tY_r^+\left(a\right)\right)                &&
\NF\left(\tY_r^+\left(\pm\sqrt{\zeta}a\right)\right) \\
& \text{\huge$\circlearrowleft$} &                   \\
\NF\left(Y_{r,r}^{++}\left(ca^2\right)\right)        &&
\NF\left(Y_{r,r}^{++}\left(\zeta ca^2\right)\right)  \\
};
\draw[<->] (m-1-1) -- (m-1-3);
\draw[<->] (m-1-3) -- (m-3-3);
\draw[<->] (m-3-3) -- (m-3-1);
\draw[<->] (m-3-1) -- (m-1-1);
\end{tikzpicture}
\end{center}
\end{figure}

This proves
$\NF\bigl(\tY_r^+(a)\bigr)
\csim \NF\bigl(\tY_r^+\left(\pm\sqrt{\zeta}a\right)\bigr)$
for $\zeta$ as above, and since the diagram is commutative, there are no
equivalences for other values of the parameters than these. Hence
\[
P_1\left(\tY_r^+, \tY_r^+\right) = C\left(X^{2l}-1\right)
\]
with $l$ as above. The set $P_2\bigl(\tY_r^+, \tY_r^+\bigr)$ can now be
determined as in Section~\ref{ssec:computing_P2}. In fact it is easy to see
that
\[
P_2\left(\tY_r^+, \tY_r^+\right) = R\left(X^2-1\right) \,.
\]
We clearly have $(a, a) \in P_3\bigl(\tY_r^+, \tY_r^+\bigr)$ for
$a \in \R \setminus \{0\}$, and also
$(a, -a) \in P_3\bigl(\tY_r^+, \tY_r^+\bigr)$ if $r$ is odd. For the case where
$r$ is even, let us consider $\NF\bigl(\tY_r^+(a)\bigr)$ as a function in $x$
and $y$ over $\R^2$ and let the parameter $a$ be positive. In this case the
function $\NF\bigl(\tY_r^+(a)\bigr) = \left(x^2+y^2\right)^2+ax^r$ takes only
non-negative values whereas
$\NF\bigl(\tY_r^+(-a)\bigr) = \left(x^2+y^2\right)^2-ax^r$ attains also
negative values. Hence there is no real coordinate transformation which takes
$\NF\bigl(\tY_r^+(a)\bigr)$ to $\NF\bigl(\tY_r^+(-a)\bigr)$. The argument is
similar for $a < 0$. To sum up, we have
\[
P_3\left(\tY_r^+, \tY_r^+\right) =
\begin{cases}
R(X^2-1), &\text{if } r \text{ is odd}, \\
R(X-1),   &\text{if } r \text{ is even}.
\end{cases}
\]

\begin{remark}\label{rem:sufficient_sets_for_Yr}
Let $r \geq 8$ be a multiple of $4$ and let $\phi_r \in \Aut_{\C}(\C[[x,y]])$
be a coordinate transformation which takes $f := \NF\bigl(\tY_r^+(a)\bigr)$ to
$\NF\bigl(\tY_r^+(-a)\bigr)$. Assume that the degree of both $\phi_r(x)$ and
$\phi_r(y)$ is less than $\frac{r}{4}$ and let $f = f_0 + f_1$ be decomposed
into its principal part $f_0 := \left(x^2+y^2\right)^2$ and $f_1 = ax^r$. Then
we have
\[
\phi(f) = \phi(f_0) + \phi(f_1) = \left(x^2+y^2\right)^2 - ax^r
\]
where the degree of $\phi(f_0)$ is less than $r$. Therefore
$\phi(f_0) = \phi_0(f_0) = \left(x^2+y^2\right)^2$ and
$\phi(f_1) = \phi_0(f_1) = -ax^r$. If $\phi_0$ is given by
$\phi_0(x) = \alpha x + \beta y$, $\phi_0(y) = \gamma x + \delta y$ with
$\alpha, \beta, \gamma, \delta \in \C$, the second of these two equations
implies $\beta = 0$ and $\alpha^r = -1$, but the first one in turn implies
$\gamma = 0$ and $\alpha^4 = 1$ which is a contradiction.

So the degree of either $\phi_r(x)$ or $\phi_r(y)$ must at least $\frac{r}{4}$.
This shows that a degree-bounded sufficient set of coordinate
transformations for
$\bigl(\NF\bigl(\tY_r^+(a)\bigr), \NF\bigl(\tY_r^+(a)\bigr)\bigr)$ and for
arbitrarily high $r$ does not exist.
\end{remark}

\section{Results}\label{sec:results}

In this section we present the sets $P_1,P_2,P_3$ in table form for every
unimodal real singularity type up to corank 2.

\begin{theorem}\label{thm:X9}
The structure of the equivalence classes of the $X_9$ singularities is as shown
in Table~\ref{tab:X9_equivalences} where for $j = 1, \ldots, 6$ and
$\rho, \sigma \in \{1, i\}$, the function $f_j^{\rho, \sigma}$ is defined as
follows:
\begin{align*}
f_1^{\rho, \sigma}(a) &:= +\rho \sigma \cdot a \,, &
f_3^{\rho, \sigma}(a) &:= \frac{+2\sigma a+12\rho\sigma}{a-2\rho} \,, &
f_5^{\rho, \sigma}(a) &:= \frac{-2\sigma a+12\rho\sigma}{a+2\rho} \,, \\
f_2^{\rho, \sigma}(a) &:= -\rho \sigma \cdot a \,, &
f_4^{\rho, \sigma}(a) &:= \frac{+2\sigma a-12\rho\sigma}{a+2\rho} \,, &
f_6^{\rho, \sigma}(a) &:= \frac{-2\sigma a-12\rho\sigma}{a-2\rho} \,.
\end{align*}

Furthermore, we use the following notations:
\begin{align*}
\C'  &:= \C \setminus \{ -2, 2\} \,, &
\R'  &:= \R \setminus \{ -2, 2\} \,, &
\C'' &:= \C \setminus \{ -2i, 2i\} \,.
\end{align*}

\begin{table}[!htbp]
\centering
\caption{$P_1$, $P_2$ and $P_3$ for the $X_9$ singularities}
\label{tab:X9_equivalences}
\begin{tabular}{|c|c||c|c|c|}
\hline

$T_1$ & $T_2$ & $P_1(T_1, T_2)$ & $P_2(T_1, T_2)$ & $P_3(T_1, T_2)$ \\
\hline\hline

$X_9^{++}$ & $X_9^{++}$ &
\multirow{4}{*}{$\Gamma_{\C'}\bigl(f_1^{1,1}, \ldots, f_6^{1,1}\bigr)$} &
\multirow{4}{*}{$\Gamma_{\R'}\bigl(f_1^{1,1}, \ldots, f_6^{1,1}\bigr)$} &
\begin{tabular}[x]{@{}l@{}}
    $\phantom{\cup}\; \Gamma_{\R'}\bigl(f_1^{1,1}\bigr)$ \\
    $\cup\; \Gamma_{\R'^{>-2}}\bigl(f_5^{1,1}\bigr)$
\end{tabular}
\\ \cline{1-2}\cline{5-5}

$X_9^{--}$ & $X_9^{--}$ &&&
\begin{tabular}[x]{@{}l@{}}
    $\phantom{\cup}\; \Gamma_{\R'}\bigl(f_1^{1,1}\bigr)$ \\
    $\cup\; \Gamma_{\R'^{<+2}}\bigl(f_3^{1,1}\bigr)$
\end{tabular}
\\ \cline{1-2}\cline{5-5}

$X_9^{++}$ & $X_9^{--}$ &&&
$\Gamma_{\R^{<-2}}\bigl(f_4^{1,1}\bigr)$
\\ \cline{1-2}\cline{5-5}

$X_9^{--}$ & $X_9^{++}$ &&&
$\Gamma_{\R^{>+2}}\bigl(f_6^{1,1}\bigr)$
\\ \hline

$X_9^{+-}$ & $X_9^{+-}$ &
\multirow{4}{*}{$\Gamma_{\C''}\bigl(f_1^{i,i}, \ldots, f_6^{i,i}\bigr)$} &
\multirow{4}{*}{$\Gamma_{\R}\bigl(f_1^{1,1}, f_2^{1,1}\bigr)$} &
\multirow{4}{*}{$\Gamma_{\R}\bigl(f_1^{1,1}\bigr)$} \\ \cline{1-2}

$X_9^{-+}$ & $X_9^{-+}$ &&& \\ \cline{1-2}

$X_9^{+-}$ & $X_9^{-+}$ &&& \\ \cline{1-2}

$X_9^{-+}$ & $X_9^{+-}$ &&& \\ \hline

$X_9^{++}$ & $X_9^{+-}$ &
\multirow{4}{*}{$\Gamma_{\C'}\bigl(f_1^{1,i}, \ldots, f_6^{1,i}\bigr)$} &
\multirow{4}{*}{$\{(-6,0), (0,0), (6,0)\}$} &
\multirow{4}{*}{$\varnothing$} \\ \cline{1-2}

$X_9^{++}$ & $X_9^{-+}$ &&& \\ \cline{1-2}

$X_9^{--}$ & $X_9^{+-}$ &&& \\ \cline{1-2}

$X_9^{--}$ & $X_9^{-+}$ &&& \\ \hline

$X_9^{+-}$ & $X_9^{++}$ &
\multirow{4}{*}{$\Gamma_{\C''}\bigl(f_1^{i,1}, \ldots, f_6^{i,1}\bigr)$} &
\multirow{4}{*}{$\{(0,-6), (0,0), (0,6)\}$} &
\multirow{4}{*}{$\varnothing$} \\ \cline{1-2}

$X_9^{-+}$ & $X_9^{++}$ &&& \\ \cline{1-2}

$X_9^{+-}$ & $X_9^{--}$ &&& \\ \cline{1-2}

$X_9^{-+}$ & $X_9^{--}$ &&& \\ \hline
\end{tabular}
\end{table}

\end{theorem}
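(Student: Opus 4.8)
The plan is to follow the three-step computational scheme of Section~\ref{sec:computations}, using Theorem~\ref{thm:sufficient_sets} as the starting point. Since all four subtypes $X_9^{++}, X_9^{--}, X_9^{+-}, X_9^{-+}$ share the sufficient set $\s(\{x,y\},\{x,y\})$, a generic transformation is $\phi(x) = t_1 x + t_2 y$, $\phi(y) = t_3 x + t_4 y$ with $t_1,\ldots,t_4 \in \C$. The first step is to compute $P_1(T_1,T_2)$ for each of the (up to symmetry, five essentially different) pairs. Writing the normal forms uniformly as $\pm x^4 + a x^2 y^2 \pm y^4$, one substitutes $\phi$, collects the coefficients of $x^4, x^3y, x^2y^2, xy^3, y^4$, sets them equal to the target coefficients $(\varepsilon_1, 0, b, 0, \varepsilon_2)$, and eliminates $t_1,\ldots,t_4$ to get a single polynomial $g \in \C[a,b]$. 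Factoring $g$ over $\C(a)$ yields six linear-in-$b$ factors whose roots are exactly the six functions $f_1^{\rho,\sigma}, \ldots, f_6^{\rho,\sigma}$; the signs $\rho,\sigma \in \{1,i\}$ bookkeep which combination of source/target signs one is in (e.g.\ passing from $x^4{+}\cdots{+}y^4$ to $x^4{+}\cdots{-}y^4$ amounts to the substitution $y \mapsto iy$, hence the $i$'s). The explicit \Singular{} run in Example~\ref{ex:P1} already carries this out for $(X_9^{++}, X_9^{++})$; the other pairs are obtained by the same computation with the appropriate signs, and the periodicity of the answer in the sign pattern gives the block structure of Table~\ref{tab:X9_equivalences}. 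The excluded-value sets $\C', \C'', \R'$ arise because the leading factor of $g$ (here $(a^2-4)^2$ type expressions, up to the $i$-twist) must be nonzero, i.e.\ $a \neq \pm 2$ resp.\ $a \neq \pm 2i$.

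The second step, computing $P_2(T_1,T_2) = P_1(T_1,T_2) \cap (\R\times\R)$, is then a matter of inspection: for the "same sign pattern" blocks all six $f_j^{1,1}$ take real values on $\R'$, so $P_2$ is their joint graph over $\R'$; for the $X_9^{+-}$/$X_9^{-+}$ block only $f_1^{i,i}, f_2^{i,i}$ (which equal $\pm a$, hence are real) survive, with the others contributing only the finitely many points where their value happens to be real — and those points already lie on $\Gamma_\R(f_1^{1,1}, f_2^{1,1})$; and for the mixed blocks relating $X_9^{++}/X_9^{--}$ with $X_9^{+-}/X_9^{-+}$ one checks that $f_j^{1,i}(a) \in \R$ forces $a$ into a finite set, giving the three isolated points $\{(-6,0),(0,0),(6,0)\}$ (resp.\ its transpose for the opposite direction of the pair).

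The third step, extracting $P_3(T_1,T_2) \subseteq P_2(T_1,T_2)$, is the main obstacle, since one must decide for each branch $b = f_j(a)$ whether the transformation realizing it can be chosen with real coefficients. Following Section~\ref{ssec:computing_P3}, for each factor $g_j$ one forms $J = I + \langle g_j \rangle \subset \R[a,b,t_1,\ldots,t_4]$ and determines the image of $V_\R(J)$ under projection to the $(a,b)$-plane. The branch $f_1 = \id$ is trivially real; the delicate cases are the "hyperbola" branches $f_3, f_5$ and their analogues, where — as the example computations show — the generic Gröbner basis of $J$ contains generators like $t_2^2 + t_4^2$, $t_3^2 + t_4^2$ (forcing all $t_i = 0$ over $\R$, hence no real automorphism) unless a sign condition such as $(a+2)t_4^4 = 1$ is satisfiable, which pins down the exact half-line ($\R^{>-2}$ or $\R^{<2}$ or $\R^{<-2}$ or $\R^{>2}$) on which the branch is realized by a genuine real transformation. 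One must in addition handle the finitely many exceptional values of $a$ in the Gröbner cover ($a = \pm 2$, excluded by the type; $a^2 + 12 = 0$, non-real; $a = 0, \pm 6$, checked individually and found to be either already covered by $f_1$ or to admit an explicit real solution as in the examples). Carrying this out for all six $f_j$ in each block — for which Gröbner covers (\verb+grobcov.lib+) and, where needed, primary decomposition (\verb+primdec.lib+) keep the ideals manageable — yields precisely the entries in the last column of Table~\ref{tab:X9_equivalences}, including the collapse to $\varnothing$ in the two mixed blocks (the three isolated $P_2$-points turn out to need complex transformations).
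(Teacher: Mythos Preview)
Your proposal is correct and follows essentially the same approach as the paper: you invoke Theorem~\ref{thm:sufficient_sets} to reduce to linear transformations, then carry out the three-step scheme of Section~\ref{sec:computations} (elimination and factorization for $P_1$, intersection with $\R^2$ for $P_2$, Gr\"obner covers and primary decomposition for $P_3$), exactly as the paper does in Examples~\ref{ex:P1}, \ref{ex:P2}, and the example of Section~\ref{ssec:computing_P3}. Your added remark that the parameters $\rho,\sigma\in\{1,i\}$ arise from the substitutions $x\mapsto ix$ or $y\mapsto iy$ relating the different sign patterns is a helpful gloss not made explicit in the paper, but the computational content is the same.
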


\begin{theorem}\label{thm:J10}
The structure of the equivalence classes of the $J_{10}$ singularities is as
shown in Table~\ref{tab:J10_equivalences} where for $j = 1, \ldots, 6$ and
$\rho, \sigma \in \{-1, +1\}$, the function $f_j^{\rho, \sigma}$ is defined as
follows:
\begin{align*}
f_1^{\rho, \sigma}(a) &:= +\sqrt{\rho \sigma} \cdot a \,, \\
f_2^{\rho, \sigma}(a) &:= -\sqrt{\rho \sigma} \cdot a \,, \displaybreak[0] \\
f_3^{\rho, \sigma}(a)
&:= + \sqrt{\frac{-\rho \sigma (a^2-\rho \cdot 4) (a^2-\rho \cdot 9)
    + a (a^2-\rho \cdot 3) \sqrt{a^2-\rho \cdot 4}}{2(a^2-\rho \cdot 4)}}\,, \\
f_4^{\rho, \sigma}(a)
&:= - \sqrt{\frac{-\rho \sigma (a^2-\rho \cdot 4) (a^2-\rho \cdot 9)
    + a (a^2-\rho \cdot 3) \sqrt{a^2-\rho \cdot 4}}{2(a^2-\rho \cdot 4)}}\,,
    \displaybreak[0] \\
f_5^{\rho, \sigma}(a)
&:= + \sqrt{\frac{-\rho \sigma (a^2-\rho \cdot 4) (a^2-\rho \cdot 9)
    - a (a^2-\rho \cdot 3) \sqrt{a^2-\rho \cdot 4}}{2(a^2-\rho \cdot 4)}}\,, \\
f_6^{\rho, \sigma}(a)
&:= - \sqrt{\frac{-\rho \sigma (a^2-\rho \cdot 4) (a^2-\rho \cdot 9)
    - a (a^2-\rho \cdot 3) \sqrt{a^2-\rho \cdot 4}}{2(a^2-\rho \cdot 4)}}\,.
\end{align*}

In each case, $\rho$ and $\sigma$ are given by
\begin{align*}
\rho &:=
\begin{cases}
    +1, &\text{if } T_1 = J_{10}^+ \,, \\
    -1, &\text{if } T_1 = J_{10}^- \,, \\
\end{cases}
&\sigma &:=
\begin{cases}
    +1, &\text{if } T_2 = J_{10}^+ \,, \\
    -1, &\text{if } T_2 = J_{10}^- \,. \\
\end{cases}
\end{align*}

Furthermore, we use the following notations:
{\setlength{\jot}{5pt}   
\begin{align*}
\xi  &:= \smash{\textstyle\frac{3}{\sqrt{2}}}    \,, &
I_1  &:= \left] -\infty, -\xi \right[ \subset \R \,, \\
\C'  &:= \C \setminus \{ -2, 2 \}                \,, &
I_2  &:= \left] -\xi, -2 \right[ \subset \R      \,, \\
\R'  &:= \R \setminus \{ -2, 2 \}                \,, &
I_3  &:= \left] +2, +\xi \right[ \subset \R      \,, \\
\C'' &:= \C \setminus \{ -2i, 2i \}              \,, &
I_4  &:= \left] +\xi, +\infty \right[ \subset \R \,.
\end{align*}}

\begin{table}[!htbp]
\centering
\caption{$P_1$, $P_2$ and $P_3$ for the $J_{10}$ singularities}
\label{tab:J10_equivalences}
\begin{tabular}{|c|c||c|c|c|}
\hline

$T_1$ & $T_2$ & $P_1(T_1, T_2)$ & $P_2(T_1, T_2)$ & $P_3(T_1, T_2)$ \\
\hline\hline

$J_{10}^+$ & $J_{10}^+$ &
$\Gamma_{\C'}\left(f_1^{\rho,\sigma}, \ldots, f_6^{\rho,\sigma}\right)$ &
\begin{tabular}[x]{@{}l@{}}
    $\phantom{\cup}\; \Gamma_{\R'}\left(f_1^{\rho,\sigma},
        f_2^{\rho,\sigma}\right)$ \\
    $\cup\; \Gamma_{\R^{>+2}}\left(f_3^{\rho,\sigma},
        f_4^{\rho,\sigma}\right)$ \\
    $\cup\; \Gamma_{\R^{<-2}}\left(f_5^{\rho,\sigma},
        f_6^{\rho,\sigma}\right)$ \\
    $\cup \, \{(0, -\xi), (0, +\xi)\}$ \\
    $\cup \, \{(-\xi, 0), (+\xi, 0)\}$ \\
\end{tabular} &
\begin{tabular}[x]{@{}l@{}}
    $\phantom{\cup}\; \Gamma_{\R'}\left(f_1^{\rho,\sigma}\right)$ \\
    $\cup\; \Gamma_{\R^{>+2}}\left(f_4^{\rho,\sigma}\right)$ \\
    $\cup\; \Gamma_{\R^{<-2}}\left(f_5^{\rho,\sigma}\right)$ \\
\end{tabular} \\
\hline

$J_{10}^-$ & $J_{10}^-$ &
$\Gamma_{\C''}\left(f_1^{\rho,\sigma}, \ldots, f_6^{\rho,\sigma}\right)$ &
$\Gamma_{\R}\left(f_1^{\rho,\sigma}, f_2^{\rho,\sigma}\right)$ &
$\Gamma_{\R}\left(f_1^{\rho,\sigma}\right)$ \\
\hline

$J_{10}^+$ & $J_{10}^-$ &
$\Gamma_{\C'}\left(f_1^{\rho,\sigma}, \ldots, f_6^{\rho,\sigma}\right)$ &
\begin{tabular}[x]{@{}l@{}}
    $\phantom{\cup}\; \{(0, 0)\}$ \\
    $\cup\; \Gamma_{\R^{>+2}}\left(f_3^{\rho,\sigma},
        f_4^{\rho,\sigma}\right)$ \\
    $\cup\; \Gamma_{\R^{<-2}}\left(f_5^{\rho,\sigma},
        f_6^{\rho,\sigma}\right)$ \\
\end{tabular} &
\begin{tabular}[x]{@{}l@{}}
    $\phantom{\cup}\; \Gamma_{I_4} \left(f_3^{\rho,\sigma}\right)$
    $\cup\; \Gamma_{I_3} \left(f_4^{\rho,\sigma}\right)$ \\
    $\cup\; \Gamma_{I_2} \left(f_5^{\rho,\sigma}\right)$
    $\cup\; \Gamma_{I_1} \left(f_6^{\rho,\sigma}\right)$ \\
    $\cup \, \{(-\xi, 0), (+\xi, 0)\}$ \\
\end{tabular} \\
\hline

$J_{10}^-$ & $J_{10}^+$ &
$\Gamma_{\C''}\left(f_1^{\rho,\sigma}, \ldots, f_6^{\rho,\sigma}\right)$ &
\begin{tabular}[x]{@{}l@{}}
    $\phantom{\cup}\; \{(0, 0)\}$ \\
    $\cup\; \Gamma_{\R}\left(f_3^{\rho,\sigma}, \ldots,
        f_6^{\rho,\sigma}\right)$ \\
\end{tabular} &
$\Gamma_{\R}\left(f_3^{\rho,\sigma}, f_6^{\rho,\sigma}\right)$ \\
\hline

\end{tabular}
\end{table}

\end{theorem}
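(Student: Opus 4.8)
The plan is to follow the computational scheme of Section~\ref{sec:computations}. By Theorem~\ref{thm:sufficient_sets}, the set $\s(\{x, y^2\}, \{y\})$ is a sufficient set of coordinate transformations for every pair of subtypes of $J_{10}$, so I may restrict to automorphisms of the form $\phi(x) = \alpha x + \beta y^2$, $\phi(y) = \gamma y$ with $\alpha, \beta, \gamma \in \C$. Writing the normal forms as $\NF(J_{10}^\rho(a)) = x^3 + a x^2 y^2 + \rho x y^4$ and $\NF(J_{10}^\sigma(b)) = x^3 + b x^2 y^2 + \sigma x y^4$ with $\rho, \sigma$ as in the statement, the equation $\phi(\NF(J_{10}^\rho(a))) = \NF(J_{10}^\sigma(b))$ is, by comparing the coefficients of $x^3$, $x^2 y^2$, $x y^4$ and $y^6$, equivalent to the system
\begin{align*}
\alpha^3 &= 1, &
3\alpha^2\beta + a\alpha^2\gamma^2 &= b, \\
3\alpha\beta^2 + 2a\alpha\beta\gamma^2 + \rho\alpha\gamma^4 &= \sigma, &
\beta(\beta^2 + a\beta\gamma^2 + \rho\gamma^4) &= 0 \,.
\end{align*}
Let $I \subset \C[a, b, \alpha, \beta, \gamma]$ be the ideal generated by these relations.

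For $P_1$ I would eliminate $\alpha, \beta, \gamma$ from $I$ as in Section~\ref{ssec:computing_P1}, obtaining a principal ideal $\langle g\rangle \subset \C[a,b]$, and factor $g$. The last equation of the system splits into the branch $\beta = 0$, which together with $\alpha^3=1$ gives $b = \pm\sqrt{\rho\sigma}\,a$ and hence the functions $f_1^{\rho,\sigma}, f_2^{\rho,\sigma}$, and the branch $\beta^2 + a\beta\gamma^2 + \rho\gamma^4 = 0$; solving this quadratic for $\beta/\gamma^2$ and eliminating $\gamma$ from the two remaining equations produces a further factor of $g$, of degree $4$ in $b$, whose roots are exactly $f_3^{\rho,\sigma}, \ldots, f_6^{\rho,\sigma}$. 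The two nested square roots $\sqrt{\,\cdots \pm a(a^2-\rho\cdot3)\sqrt{a^2-\rho\cdot4}\,}$ in these functions come, respectively, from the choice of root of the quadratic in $\beta$ and from extracting a square root of $\gamma^2$. Taking joint graphs over the parameter domain --- $\C' = \C\setminus\{-2,2\}$ when $T_1 = J_{10}^+$, all of $\C$ when $T_1 = J_{10}^-$ --- gives the $P_1$-column of Table~\ref{tab:J10_equivalences}.

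Next, $P_2(T_1,T_2) = P_1(T_1,T_2) \cap (\R\times\R)$ as in Section~\ref{ssec:computing_P2}, so it remains to decide for which real $a$ each $f_j^{\rho,\sigma}(a)$ is real. The values $f_1^{\rho,\sigma}(a), f_2^{\rho,\sigma}(a)$ are real for all real $a$ precisely when $\rho\sigma = 1$, which is what distinguishes the rows $(J_{10}^+, J_{10}^+)$ and $(J_{10}^-, J_{10}^-)$ from the two mixed rows. For $f_3^{\rho,\sigma}, \ldots, f_6^{\rho,\sigma}$, reality of the inner radical $\sqrt{a^2-\rho\cdot4}$ forces $\lvert a\rvert \geq 2$ when $\rho = +1$ and is automatic when $\rho = -1$, and the sign of the outer radicand then subdivides the relevant half-lines at the zeros of that radicand, namely $a = \pm2$ and $a = \pm\xi$ with $\xi = \tfrac{3}{\sqrt2}$; at $a = 0$ the outer radicand is nonnegative although the inner radical is imaginary, which accounts for the isolated points $(0,\pm\xi)$ and, by the symmetry $(u,v)\leftrightarrow(v,u)$ of $P_2(T,T)$, for $(\pm\xi,0)$. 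Carrying out this elementary sign analysis yields the $P_2$-column.

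Finally, for $P_3(T_1,T_2) \subseteq P_2(T_1,T_2)$ I would, as in Section~\ref{ssec:computing_P3}, test each branch $V(g_j)$ and each exceptional point occurring in $P_2$ by adjoining the corresponding relation to $I$ and deciding whether the resulting ideal $J \subset \R[a,b,\alpha,\beta,\gamma]$ has a real zero with $\alpha\gamma \neq 0$, using Gr\"obner covers (\texttt{grobcov.lib}) and, where these are still too complicated, primary decomposition (\texttt{primdec.lib}); typically one finds generators which are sums of squares of $\alpha, \beta, \gamma$, or of the form $p(a)\cdot\gamma^4 + 1$ with $p(a)$ nonvanishing, making the real solvability transparent. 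I expect the main obstacle to be the degree-$4$ factor corresponding to $f_3^{\rho,\sigma}, \ldots, f_6^{\rho,\sigma}$: it has degree greater than one in $b$ and the associated ideal has both real and non-real zeros, so one must determine precisely which of the four roots is reached by a \emph{real} transformation for each given $a$ --- this is the extra calculation flagged after Section~\ref{ssec:computing_P3}, and it is what produces the intervals $I_1, \ldots, I_4$ in the $P_3$-entries. It is also where the asymmetry between the two subtypes shows up: within $J_{10}^-$ only the branch $f_1^{\rho,\sigma}$ survives over $\R$ (so the parameter of a $J_{10}^-$ singularity is uniquely determined), whereas the branches $f_3^{\rho,\sigma}, \ldots, f_6^{\rho,\sigma}$ together with the points $(\pm\xi,0)$ for parameter value $0$ exhibit every real $J_{10}^-$ singularity as a real $J_{10}^+$ singularity while the converse fails for $\lvert a\rvert < 2$, which is the precise content of the statement that $J_{10}^-$ is redundant and $J_{10}^+$ is not. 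As an independent check that the real equivalences excluded in this last step genuinely do not exist, one can compare the signs taken by $\NF(J_{10}^+(a))$ and $\NF(J_{10}^-(b))$ as real-valued functions near the origin, in the spirit of the argument for $\tY_r$ in Section~\ref{ssec:Yr}.
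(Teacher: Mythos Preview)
Your proposal is correct and follows essentially the same approach as the paper: it applies the sufficient set $\s(\{x,y^2\},\{y\})$ from Theorem~\ref{thm:sufficient_sets}, sets up the coefficient ideal $I$, and then carries out the three-step scheme of Section~\ref{sec:computations} (elimination/factorization for $P_1$, intersection with $\R\times\R$ for $P_2$, Gr\"obner covers and primary decomposition for $P_3$), including the extra calculation singled out in the remark after Section~\ref{ssec:computing_P3} for distinguishing the roots $f_3^{\rho,\sigma},\ldots,f_6^{\rho,\sigma}$ of the degree-$4$ factor. The only addition beyond the paper is your closing suggestion of an independent sign check in the style of Section~\ref{ssec:Yr}, which is not used in the paper's treatment of $J_{10}$ but does no harm.
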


\begin{remark}
In Theorem~\ref{thm:J10}, the definitions of
$f_1^{\rho,\sigma}, \ldots, f_6^{\rho,\sigma}$ involve square roots of possibly
complex values. These square roots are defined as follows: For any complex
number $z = re^{i\phi} \in \C$ with $r, \phi \in \R$, $r > 0$, and
$0 \leq \phi < 2 \pi$, we set
\[
\sqrt{z} := \sqrt{r}e^{i\frac{\phi}{2}} \,.
\]
In particular, $\Imag(\sqrt{z}) > 0$ for all $z \in \C \setminus \R^{>0}$ and
$\sqrt{z} \geq 0$ for all $z \in \R^{>0}$.
\end{remark}

\begin{theorem}\label{thm:J10+k}
The structure of the equivalence classes of the $J_{10+k}$
singularities is as shown in Table~\ref{tab:J10+k_equivalences} where in each
case, $l$ and $s$ are given by
\begin{align*}
l &:= \frac{6}{\gcd(6,k)}, \text{ and} \\
s &:=
\begin{cases}
  +1, &\text{if } k \equiv 2 \pmod{4}, \\
  -1, &\text{else.}
\end{cases}
\end{align*}

\begin{table}[!tbp]
\centering
\caption{$P_1$, $P_2$ and $P_3$ for the $J_{10+k}$ singularities}
\label{tab:J10+k_equivalences}
\begin{tabular}{|c|c||c|c|c|}
\hline

$T_1$ & $T_2$ & $P_1(T_1, T_2)$ & $P_2(T_1, T_2)$ & $P_3(T_1, T_2)$ \\
\hline\hline

$J_{10+k}^+$ & $J_{10+k}^+$ &
\multirow{2}{*}{$C(X^l-1)$} &
\multirow{2}{*}{$R(X^l-1)$} &
\multirow{2}{*}{$R(X^l-1)$} \\
\cline{1-2}

$J_{10+k}^-$ & $J_{10+k}^-$ &&& \\
\hline

$J_{10+k}^+$ & $J_{10+k}^-$ &
\multirow{2}{*}{$C(X^l-s)$} &
\multirow{2}{*}{$R(X^l-s)$} &
\multirow{2}{*}{$\varnothing$} \\
\cline{1-2}

$J_{10+k}^-$ & $J_{10+k}^+$ &&& \\
\hline

\end{tabular}
\end{table}

\end{theorem}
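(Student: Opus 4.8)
The plan is to follow the general scheme of Sections~\ref{ssec:computing_P1}--\ref{ssec:computing_P3}, exploiting that by Theorem~\ref{thm:sufficient_sets} the set $\s(\{x\},\{y\})$ is a sufficient set of coordinate transformations for any two subtypes of $J_{10+k}$. So a generic transformation to consider is $\phi(x) = \alpha x$, $\phi(y) = \gamma y$ with $\alpha, \gamma \in \C^\ast$. First I would write, for $T_1$ with sign $\rho \in \{+,-\}$ and $T_2$ with sign $\sigma \in \{+,-\}$,
\[
\phi\bigl(\NF(T_1(a))\bigr)
= \alpha^3 x^3 + \rho\,\alpha^2\gamma^2 x^2 y^2 + a\,\gamma^{6+k} y^{6+k}
\]
and set this equal to $\NF(T_2(b)) = x^3 + \sigma x^2 y^2 + b\, y^{6+k}$. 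Comparing the coefficients of $x^3$, $x^2y^2$, and $y^{6+k}$ gives the three equations $\alpha^3 = 1$, $\rho\,\alpha^2\gamma^2 = \sigma$, and $a\,\gamma^{6+k} = b$. This is the elimination step: from the first two equations one solves for $\alpha$ and $\gamma^2$ in terms of $\rho,\sigma$, substitutes into the third, and reads off the relation between $a$ and $b$.

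Concretely, $\alpha^3 = 1$ forces $\alpha$ to be a cube root of unity, so $\alpha^2$ is again a cube root of unity and in particular the \emph{only} constraint $\alpha^2$ places, once we also use $\rho\alpha^2\gamma^2 = \sigma$, is $\gamma^2 = \sigma\rho^{-1}\alpha^{-2} = \sigma\rho\,\alpha$ (using $\rho^{-1}=\rho$ and $\alpha^{-2}=\alpha$). Hence $\gamma^6 = (\gamma^2)^3 = (\sigma\rho)^3\alpha^3 = \sigma\rho$ (again using that signs are their own inverses and $\alpha^3=1$), so $\gamma^{6+k} = \gamma^6\gamma^k = \sigma\rho\,\gamma^k$ and $b = a\gamma^{6+k} = \sigma\rho\,\gamma^k a$. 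Thus $b/a$ ranges over the set of values $\sigma\rho\,\gamma^k$ as $\gamma$ runs over all complex numbers with $\gamma^6 = \sigma\rho$. Writing $r := b/a$, one checks that the set of attainable ratios $r$ is precisely $\{\, r \in \C \mid r^6 = (\sigma\rho)^{6+k}\,\} = \{\, r \mid r^6 = (\sigma\rho)^k\,\}$, and after reducing the exponent via $l := 6/\gcd(6,k)$ this becomes $\{\, r \mid r^l = (\sigma\rho)^{k}\,\}$; when $\rho=\sigma$ the right side is $1$, giving $P_1 = C(X^l - 1)$, and when $\rho \neq \sigma$ it is $(-1)^k$, which one verifies equals $s$ as defined (indeed $(-1)^k = +1$ iff $k$ is even, but the subtler fact is that $C(X^l-(-1)^k) = C(X^l-s)$ with $s$ as in the theorem — this reduction, sorting out when $-1$ is or is not an $l$th power, is the one genuinely fiddly bookkeeping point). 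This establishes the $P_1$ column. The $P_2$ column is then immediate from $P_2 = P_1 \cap (\R\times\R)$ as in Section~\ref{ssec:computing_P2}, which simply replaces $C$ by $R$.

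For the $P_3$ column I would argue directly about real transformations, i.e.\ real $\alpha,\gamma$. The equation $\alpha^3 = 1$ over $\R$ forces $\alpha = 1$, hence $\gamma^2 = \sigma\rho$ and $b = \sigma\rho\,\gamma^k a$. If $\rho = \sigma$, then $\gamma^2 = 1$ so $\gamma = \pm 1$, $\gamma^k = \pm 1$, and every real $r$ with $r^l = 1$ is realized (for $l$ even by $\gamma = \pm1$ giving $\gamma^k = \pm 1$, and $R(X^l-1)$ is exactly $R(X^2-1)$ or $R(X-1)$ accordingly — one checks the two cases agree with $R(X^l-1)$), so $P_3 = R(X^l - 1) = P_2$. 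If $\rho \neq \sigma$, then $\gamma^2 = -1$ has no real solution, so there is no real transformation at all between the two normal forms for any values of the parameters, giving $P_3 = \varnothing$. The main obstacle I anticipate is not any of these computations individually but the careful matching of the shorthand $R(X^l - s)$ and $C(X^l - s)$ — in particular verifying that the case distinction $s = +1$ iff $k \equiv 2 \pmod 4$ is the correct way to encode "$(-1)^k$ up to taking $l$th powers", which requires tracking the $2$-adic valuation of $l$ against the parity of $k$; a short case analysis on $k \bmod 4$ (equivalently on whether $\gcd(6,k)$ is even and how that interacts with $6/\gcd(6,k)$) settles it.
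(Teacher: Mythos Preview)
Your overall strategy is sound and, compared with the paper, more transparent: where the paper treats $J_{10+k}$ by carrying out the elimination in \textsc{Singular} for enough values of $k$ and then invoking periodicity (see the remark at the end of Section~\ref{ssec:computing_P3}), you reduce via the sufficient set $\s(\{x\},\{y\})$ from Theorem~\ref{thm:sufficient_sets} and solve the scaling equations by hand for general $k$. Your arguments for $P_2$ and for $P_3$ are correct.

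The gap is in your $P_1$ computation when $\rho \neq \sigma$. You correctly obtain $b/a = \gamma^{6+k}$ with $\gamma$ ranging over the six solutions of $\gamma^6 = \sigma\rho$, but you then assert that the attainable ratios are \emph{all} of $\{r : r^6 = (\sigma\rho)^k\}$ and ``reduce'' this to $\{r : r^l = (\sigma\rho)^k\}$. Neither step is right: the map $\gamma \mapsto \gamma^{6+k}$ on the six roots of $\gamma^6 = \sigma\rho$ is $d$-to-$1$ with $d = \gcd(6,k)$, so its image has only $l = 6/d$ elements, and that image is not described by $r^l = (-1)^k$ in general. For a concrete failure take $k = 4$, so $l = 3$: your formula gives $r^3 = (-1)^4 = 1$, whereas the theorem (with $s = -1$) asserts $r^3 = -1$; since $C(X^3-1) \neq C(X^3+1)$, your proposed patch ``$C(X^l-(-1)^k) = C(X^l-s)$'' cannot hold either.

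The fix is to raise $r = \gamma^{6+k}$ to the $l$th power rather than the sixth. Since $d \mid (6+k)$, one has
\[
r^l \;=\; \gamma^{\,l(6+k)} \;=\; (\gamma^6)^{(6+k)/d} \;=\; (\sigma\rho)^{(6+k)/d},
\]
and a short case check on $d \in \{1,2,3,6\}$ shows that $(-1)^{(6+k)/d} = +1$ exactly when $k \equiv 2 \pmod 4$, i.e.\ exactly when $s = +1$. As the image has $l$ elements and $r^l = s$ has exactly $l$ solutions in $\C$, the image is precisely $\{r : r^l = s\}$, which gives $P_1 = C(X^l - s)$ in the cross case and completes your argument.
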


\begin{theorem}\label{thm:X9+k}
The structure of the equivalence classes of the $X_{9+k}$
singularities is as shown in Table~\ref{tab:X9+k_equivalences} where in each
case, $l$ and $s$ are given by
\begin{align*}
l &:= \frac{4}{\gcd(4,k)}, \text{ and} \\
s &:=
\begin{cases}
  +1, &\text{if } k \equiv 4 \pmod{8}, \\
  -1, &\text{else.}
\end{cases}
\end{align*}

\begin{table}[!htbp]
\centering
\caption{$P_1$, $P_2$ and $P_3$ for the $X_{9+k}$ singularities}
\label{tab:X9+k_equivalences}
\begin{tabular}{|c|c||c|c|c|}
\hline

$T_1$ & $T_2$ & $P_1(T_1, T_2)$ & $P_2(T_1, T_2)$ & $P_3(T_1, T_2)$ \\
\hline\hline

$X_{9+k}^{++}$ & $X_{9+k}^{++}$ &
\multirow{4}{*}{$C(X^l-1)$} &
\multirow{4}{*}{$R(X^l-1)$} &
\multirow{4}{*}{$R(X^{k+1}-1)$} \\
\cline{1-2}

$X_{9+k}^{+-}$ & $X_{9+k}^{+-}$ &&& \\ \cline{1-2}
$X_{9+k}^{-+}$ & $X_{9+k}^{-+}$ &&& \\ \cline{1-2}
$X_{9+k}^{--}$ & $X_{9+k}^{--}$ &&& \\ \hline

$X_{9+k}^{++}$ & $X_{9+k}^{+-}$ &
\multirow{4}{*}{$C(X^l-1)$} &
\multirow{4}{*}{$R(X^l-1)$} &
\multirow{4}{*}{$\varnothing$} \\
\cline{1-2}

$X_{9+k}^{+-}$ & $X_{9+k}^{++}$ &&& \\ \cline{1-2}
$X_{9+k}^{-+}$ & $X_{9+k}^{--}$ &&& \\ \cline{1-2}
$X_{9+k}^{--}$ & $X_{9+k}^{-+}$ &&& \\ \hline

$X_{9+k}^{++}$ & $X_{9+k}^{-+}$ &
\multirow{4}{*}{$C(X^l-s)$} &
\multirow{4}{*}{$R(X^l-s)$} &
\multirow{4}{*}{$\varnothing$} \\
\cline{1-2}

$X_{9+k}^{+-}$ & $X_{9+k}^{--}$ &&& \\ \cline{1-2}
$X_{9+k}^{-+}$ & $X_{9+k}^{++}$ &&& \\ \cline{1-2}
$X_{9+k}^{--}$ & $X_{9+k}^{+-}$ &&& \\ \hline

$X_{9+k}^{++}$ & $X_{9+k}^{--}$ &
\multirow{4}{*}{$C(X^l-s)$} &
\multirow{4}{*}{$R(X^l-s)$} &
\multirow{4}{*}{$\varnothing$} \\
\cline{1-2}

$X_{9+k}^{+-}$ & $X_{9+k}^{-+}$ &&& \\ \cline{1-2}
$X_{9+k}^{-+}$ & $X_{9+k}^{+-}$ &&& \\ \cline{1-2}
$X_{9+k}^{--}$ & $X_{9+k}^{++}$ &&& \\ \hline

\end{tabular}
\end{table}

\end{theorem}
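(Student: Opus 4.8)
The plan is to reduce, via Theorem~\ref{thm:sufficient_sets}, to diagonal coordinate transformations and then to read everything off from three coefficient equations. First I would observe that the argument given for the hyperbolic case in the proof of Theorem~\ref{thm:sufficient_sets} applies verbatim over $\R$, since taking weighted jets preserves the field of coefficients; hence $S := \s(\{x\}, \{y\})$ is a sufficient set of coordinate transformations for $\bigl(\NF(T_1(a)), \NF(T_2(a))\bigr)$ over $\C$ and, in the obvious adapted sense, over $\R$. Write $\NF(T_1(a)) = \sigma_1 x^4 + \tau_1 x^2 y^2 + a\, y^{4+k}$ and $\NF(T_2(b)) = \sigma_2 x^4 + \tau_2 x^2 y^2 + b\, y^{4+k}$ with $\sigma_i, \tau_i \in \{+1, -1\}$, and let $\phi \in S$ be given by $\phi(x) = \alpha x$, $\phi(y) = \gamma y$ with $\alpha, \gamma \in \C^{*}$. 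Comparing the coefficients of $x^4$, $x^2 y^2$, and $y^{4+k}$ in $\phi(\NF(T_1(a))) = \NF(T_2(b))$ yields $\sigma_1 \alpha^4 = \sigma_2$, $\tau_1 \alpha^2 \gamma^2 = \tau_2$, and $a \gamma^{4+k} = b$. Eliminating $\alpha$ from the first two equations, this system is solvable for some $\alpha \in \C^{*}$ if and only if $\gamma^4 = \sigma_1 \sigma_2$, in which case $\alpha^2 = \tau_1 \tau_2 \gamma^{-2}$ does the job; in particular the signs $\tau_i$ impose no constraint over $\C$. Hence, for $a, b \neq 0$ as required by the definition of the type, $(a, b) \in P_1(T_1, T_2)$ if and only if $b = ra$ where $r = \gamma^{4+k} = \sigma_1\sigma_2\, \gamma^{k}$ for some $\gamma$ with $\gamma^4 = \sigma_1\sigma_2$.

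Next I would determine the admissible ratios $r$, and thereby $P_1$. Set $l := 4/\gcd(4,k)$. If $\sigma_1 = \sigma_2$, then $\gamma$ runs over the fourth roots of unity and $r = \gamma^{k}$ runs over exactly the $l$-th roots of unity, so $P_1(T_1, T_2) = C(X^l - 1)$. If $\sigma_1 \neq \sigma_2$, then $\gamma$ runs over the four solutions of $\gamma^4 = -1$, which form a coset of the fourth roots of unity inside the eighth roots of unity, so $r = -\gamma^k$ runs over a coset of the $l$-th roots of unity; writing $\gamma = \omega\zeta$ with $\omega := e^{i\pi/4}$ and $\zeta^4 = 1$, one computes $r^l = (-1)^l \omega^{kl} = (-1)^{\,l + k/\gcd(4,k)}$, and a short case distinction on $\gcd(4,k) \in \{1, 2, 4\}$ identifies this sign with $s$ as defined in the theorem. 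As $X^l - s$ is separable and the coset has exactly $l$ elements, $r$ ranges over precisely the roots of $X^l - s$, so $P_1(T_1, T_2) = C(X^l - s)$. Since $P_1$ depends only on $\sigma_1\sigma_2$, the four blocks of subtype pairs in Table~\ref{tab:X9+k_equivalences} collapse to the two cases just treated (two blocks with $\sigma_1\sigma_2 = +1$, two with $\sigma_1\sigma_2 = -1$), which explains the grouping; moreover $P_2(T_1, T_2) = P_1(T_1, T_2) \cap (\R \times \R)$ equals $R(X^l - 1)$ or $R(X^l - s)$ accordingly.

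For $P_3(T_1, T_2)$ I would redo the coefficient comparison, now requiring $\alpha, \gamma \in \R^{*}$. The equation $\gamma^4 = \sigma_1\sigma_2$ has a real solution only when $\sigma_1 = \sigma_2$, and then $\gamma = \pm 1$, which forces $\alpha^2 = \tau_1\tau_2\gamma^{-2} = \tau_1\tau_2$, solvable over $\R$ only when $\tau_1 = \tau_2$. By the $\R$-version of Theorem~\ref{thm:sufficient_sets} there is therefore no real coordinate transformation at all unless $T_1 = T_2$, so $P_3(T_1, T_2) = \varnothing$ in the three off-diagonal blocks. When $T_1 = T_2$, the reachable values of $\gamma$ are $\pm 1$, hence $r = \gamma^{4+k} \in \{1, (-1)^k\}$, which is exactly the set of real roots of $X^{k+1} - 1$, and thus $P_3(T_1, T_2) = R(X^{k+1} - 1)$.

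All of this is carried out with $k$ kept symbolic, so the result holds for every $k > 0$ at once and no separate periodicity argument is needed here. The only slightly delicate point is the bookkeeping for the exponent $s$ when $\sigma_1 \neq \sigma_2$: one must verify both that every ratio $r$ swept out satisfies $r^l = s$ and, conversely, that every root of $X^l - s$ is attained, and then match the parity $(-1)^{\,l + k/\gcd(4,k)}$ against the condition $k \equiv 4 \pmod 8$. Everything else reduces to the elementary comparison of coefficients described above.
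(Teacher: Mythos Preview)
Your proof is correct and takes a genuinely different route from the paper. The paper does not give a self-contained argument for Theorem~\ref{thm:X9+k}; instead it relies on the computational framework of Section~\ref{sec:computations}: for each fixed $k$ one forms the coefficient ideal, eliminates with \textsc{Singular} to obtain $P_1$, intersects with $\R\times\R$ to obtain $P_2$, and then uses Gr\"obner covers and primary decomposition to decide $P_3$. Because $X_{9+k}$ is an infinite series, the paper then appeals to the periodicity remark at the end of Section~\ref{ssec:computing_P3}: one runs the computation for enough values of $k$, observes periodicity, and checks that the intermediate steps are indeed periodic.

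By contrast, you keep $k$ symbolic throughout. You reduce to diagonal $\phi$ via Theorem~\ref{thm:sufficient_sets}, solve the three coefficient equations $\sigma_1\alpha^4=\sigma_2$, $\tau_1\alpha^2\gamma^2=\tau_2$, $a\gamma^{4+k}=b$ by hand, and identify the admissible ratios $r$ directly with the root sets of $X^l-1$ or $X^l-s$. For $P_3$ you do not go through Gr\"obner covers at all: you observe that the proof of Theorem~\ref{thm:sufficient_sets} for the hyperbolic case is field-independent (weighted jets of a real automorphism are real), so $\s(\{x\},\{y\})$ is also sufficient over $\R$, and then the same three equations with $\alpha,\gamma\in\R^*$ immediately force $\sigma_1=\sigma_2$, $\tau_1=\tau_2$, $\gamma=\pm1$.

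What each approach buys: your argument is short, fully symbolic, and dispenses with the periodicity step; it also makes transparent why $P_1$ depends only on $\sigma_1\sigma_2$ and why the $\tau_i$ drop out over $\C$. The paper's approach, on the other hand, is uniform across all the unimodal types, including the parabolic cases $X_9$ and $J_{10}$ where the sufficient set is not diagonal and a direct hand computation of this kind would be unpleasant; there the elimination/Gr\"obner-cover machinery really earns its keep. For $X_{9+k}$ specifically, your route is the cleaner one.
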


\begin{theorem}\label{thm:Yrs}
The structure of the equivalence classes of the $Y_{r,s}$ singularities is as
shown in Table~\ref{tab:Yrs_equivalences} where in each case, $l$, $s_1$ and
$s_2$ are given by
\begin{align*}
l &:= \frac{r}{\gcd(r,s)} \cdot \gcd(2, r+1, s+1) \,, \displaybreak[0] \\
s_1 &:=
\begin{cases}
  +1, &\text{if } r \equiv 0 \pmod{4} \text{ or } s \equiv 0 \pmod{4}, \\
  -1, &\text{else,}
\end{cases} \displaybreak[0] \\
s_2 &:=
\begin{cases}
  +1, &\text{if } r \not\equiv 0 \pmod{2}
      \text{ or } \frac{s}{\gcd(r,s)} \equiv 0 \pmod{2}, \\
  -1, &\text{else.}
\end{cases}
\end{align*}

In the special case where $r = s$, additional equivalences occur. They are
listed in Table~\ref{tab:Yrr_equivalences}.

\begin{table}[!htbp]
\centering
\caption{$P_1$, $P_2$ and $P_3$ for the $Y_{r,s}$ singularities}
\label{tab:Yrs_equivalences}
\begin{tabular}{|c|c||c|c|c|}
\hline

$T_1$ & $T_2$ & $P_1(T_1, T_2)$ & $P_2(T_1, T_2)$ & $P_3(T_1, T_2)$ \\
\hline\hline

$Y_{r,s}^{++}$ & $Y_{r,s}^{++}$ &
\multirow{4}{*}{$C(X^l-1)$} &
\multirow{4}{*}{$R(X^l-1)$} &
\multirow{4}{*}{$R(X^{s+1}-1)$}
\\ \cline{1-2}

$Y_{r,s}^{-+}$ & $Y_{r,s}^{-+}$ &&&
\\ \cline{1-2}

$Y_{r,s}^{+-}$ & $Y_{r,s}^{+-}$ &&&
\\ \cline{1-2}

$Y_{r,s}^{--}$ & $Y_{r,s}^{--}$ &&&
\\ \hline

$Y_{r,s}^{++}$ & $Y_{r,s}^{-+}$ &
\multirow{4}{*}{$C(X^l-s_1)$} &
\multirow{4}{*}{$R(X^l-s_1)$} &
\multirow{4}{*}{$\varnothing$}
\\ \cline{1-2}

$Y_{r,s}^{-+}$ & $Y_{r,s}^{++}$ &&&
\\ \cline{1-2}

$Y_{r,s}^{+-}$ & $Y_{r,s}^{--}$ &&&
\\ \cline{1-2}

$Y_{r,s}^{--}$ & $Y_{r,s}^{+-}$ &&&
\\ \hline

$Y_{r,s}^{++}$ & $Y_{r,s}^{+-}$ &
\multirow{4}{*}{$C(X^l-s_2)$} &
\multirow{4}{*}{$R(X^l-s_2)$} &
\multirow{4}{*}{$\begin{cases}
  R(X^l-s_2),  &\text{if } r \not\equiv 0 \pmod{2} \\
  \varnothing, &\text{if } r \equiv 0 \pmod{2}
\end{cases}$}
\\ \cline{1-2}

$Y_{r,s}^{-+}$ & $Y_{r,s}^{--}$ &&&
\\ \cline{1-2}

$Y_{r,s}^{+-}$ & $Y_{r,s}^{++}$ &&&
\\ \cline{1-2}

$Y_{r,s}^{--}$ & $Y_{r,s}^{-+}$ &&&
\\ \hline

$Y_{r,s}^{++}$ & $Y_{r,s}^{--}$ &
\multirow{4}{*}{$C(X^l-s_1 s_2)$} &
\multirow{4}{*}{$R(X^l-s_1 s_2)$} &
\multirow{4}{*}{$\varnothing$}
\\ \cline{1-2}

$Y_{r,s}^{-+}$ & $Y_{r,s}^{+-}$ &&&
\\ \cline{1-2}

$Y_{r,s}^{+-}$ & $Y_{r,s}^{-+}$ &&&
\\ \cline{1-2}

$Y_{r,s}^{--}$ & $Y_{r,s}^{++}$ &&&
\\ \hline

\end{tabular}
\end{table}

\begin{table}[!htbp]
\centering
\caption{Additional equivalences for the $Y_{r,s}$ singularities in the
special case $r = s$}
\label{tab:Yrr_equivalences}
\begin{tabular}{|c|c||c|}
\hline

$T_1$ & $T_2$ & Additional elements of $P_3(T_1, T_2)$ \\
\hline\hline

$Y_{r,s}^{++}$ & $Y_{r,s}^{+-}$ &
\multirow{2}{*}{$\begin{cases}
  R(X+1),      &\!\text{if } r \equiv 0 \pmod{2} \text{ and } a < 0 \\
  \varnothing, &else
\end{cases}$}
\\ \cline{1-2}

$Y_{r,s}^{-+}$ & $Y_{r,s}^{--}$ &
\\ \hline

$Y_{r,s}^{+-}$ & $Y_{r,s}^{++}$ &
\multirow{2}{*}{$\begin{cases}
  R(X+1),      &\!\text{if } r \equiv 0 \pmod{2} \text{ and } a > 0 \\
  \varnothing, &else
\end{cases}$}
\\ \cline{1-2}

$Y_{r,s}^{--}$ & $Y_{r,s}^{-+}$ &
\\ \hline

\end{tabular}
\end{table}

\end{theorem}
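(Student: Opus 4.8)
The plan is to make the equivalence problem completely explicit via Theorem~\ref{thm:sufficient_sets}, read off $P_1$ by a roots-of-unity computation, intersect with $\R\times\R$ to get $P_2$, and then track which transformations can be chosen real to obtain $P_3$. Write the two normal forms as $\NF(T_1(a)) = \epsilon_1 x^2y^2 + \epsilon_2 x^r + a y^s$ and $\NF(T_2(b)) = \delta_1 x^2y^2 + \delta_2 x^r + b y^s$ with $\epsilon_i,\delta_i\in\{+1,-1\}$ encoding the superscripts. By Theorem~\ref{thm:sufficient_sets} and Table~\ref{tab:sufficient_sets} it suffices to test $\phi\in\s(\{x\},\{y\})$, i.e.\ $\phi(x)=tx$, $\phi(y)=uy$ with $t,u\in\C^*$, augmented by the swap $\phi(x)=ty$, $\phi(y)=ux$ when $r=s$. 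Moreover, since the proof of that theorem shows that $\phi$ can be replaced by its weighted $0$-jet $\phi_0^{w_0}$, whose coefficients are real whenever those of $\phi$ are, the real points of these sets are sufficient sets for real equivalence as well, which is what is needed for $P_3$.

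For a diagonal $\phi$, comparing the coefficients of $x^2y^2$, $x^r$ and $y^s$ in $\phi(\NF(T_1(a))) = \epsilon_1 t^2u^2 x^2y^2 + \epsilon_2 t^r x^r + a u^s y^s$ and in $\NF(T_2(b))$ yields exactly the three equations $(tu)^2 = \epsilon_1\delta_1$, $t^r = \epsilon_2\delta_2$, $b = a u^s$. Hence $(a,b)\in P_1(T_1,T_2)$ if and only if $a,b\neq 0$ and $b/a$ lies in the set product $\{v^s\mid v^2=\epsilon_1\delta_1\}\cdot\{t^{-s}\mid t^r=\epsilon_2\delta_2\}$. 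The factor $\{t^s\mid t^r=\epsilon_2\delta_2\}$ is the image of the $r$-th roots of $\epsilon_2\delta_2$ under the $s$-th power map; its fibres have size $\gcd(r,s)$, so it is the set of $m$-th roots of $(\epsilon_2\delta_2)^{s/\gcd(r,s)}$, where $m:=r/\gcd(r,s)$. The factor $\{v^s\mid v^2=\epsilon_1\delta_1\}$ is the single number $(\epsilon_1\delta_1)^{s/2}$ when $s$ is even and the pair of square roots of $\epsilon_1\delta_1$ when $s$ is odd. Multiplying the two factors, one counts that the product has exactly $l := m\cdot\gcd(2,r+1,s+1)$ elements — the extra factor $2$ occurring precisely when $s$ is odd and $m$ is odd, i.e.\ when $r$ and $s$ are both odd — and checks that each element $\zeta$ of the product satisfies $\zeta^l=c$ for a fixed sign $c\in\{1,s_1,s_2,s_1s_2\}$ depending only on $\epsilon_1\delta_1$, $\epsilon_2\delta_2$ and the parities of $r,s$. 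Thus $P_1(T_1,T_2)=C(X^l-c)$, and since $c=\pm1$, intersecting with $\R\times\R$ gives $P_2(T_1,T_2)=R(X^l-c)$ at once (cf.\ Section~\ref{ssec:computing_P2}). Verifying that this bookkeeping reproduces precisely the four rows of Table~\ref{tab:Yrs_equivalences} is the most tedious, but entirely routine, part.

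For $P_3(T_1,T_2)$ we demand $t,u\in\R^*$. Then $(tu)^2=\epsilon_1\delta_1$ forces $\epsilon_1=\delta_1$, and $t^r=\epsilon_2\delta_2$ forces $\epsilon_2=\delta_2$ when $r$ is even while allowing either sign when $r$ is odd; under these constraints $t,u\in\{+1,-1\}$, so $b/a=u^s$ runs through $\{1\}$ if $s$ is even and $\{+1,-1\}$ if $s$ is odd, which is exactly the real root set of $X^{s+1}-1$. Matching this with the constant $c$ found above yields the last column of Table~\ref{tab:Yrs_equivalences}: $R(X^{s+1}-1)$ for the four diagonal pairs, $\varnothing$ whenever $\epsilon_1\neq\delta_1$ or ($\epsilon_2\neq\delta_2$ and $r$ even), and $R(X^l-s_2)$ for the $s_2$-family with $r$ odd.

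It remains to handle the swap $\phi(x)=ty$, $\phi(y)=ux$ in the case $r=s$. Comparing coefficients now gives $(tu)^2=\epsilon_1\delta_1$, $a u^r=\delta_2$, $b=\epsilon_2 t^r$; over $\C$ one checks that the resulting values of $b/a$ already lie in the set $\{\zeta\mid\zeta^l=c\}$ obtained from the diagonal transformations, so $P_1$ and $P_2$ are unchanged. Over $\R$, however, when $r$ is even the equation $a u^r=\delta_2$ together with $u^r>0$ pins down $\sign(a)=\sign(\delta_2)$, and then $b=\epsilon_2 t^r=\epsilon_2(\epsilon_1\delta_1)^{r/2}u^{-r}$ forces $b=-a$ in each of the four $s_2$-family pairs (for the remaining pairs either $\epsilon_1\neq\delta_1$, so no real swap exists, or the pair is diagonal and the swap only reproduces $(a,a)$), contributing exactly the extra half-lines listed in Table~\ref{tab:Yrr_equivalences}; when $r$ is odd the swap produces nothing new. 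The main obstacle in the whole argument is thus purely combinatorial: keeping the eight sign choices and the parities of $r$ and $s$ straight so that $l$, $c$, $s_1$, $s_2$ and the $P_3$-restrictions come out exactly as stated, and confirming that the $r=s$ swap enlarges only $P_3$ and only in the claimed way.
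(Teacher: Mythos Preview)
Your argument is correct and takes a genuinely different route from the paper. The paper does not prove Theorem~\ref{thm:Yrs} by hand: following Sections~\ref{ssec:computing_P1}--\ref{ssec:computing_P3} it sets up the coefficient ideal in \textsc{Singular}, eliminates, factors, then uses Gr\"obner covers and primary decomposition to decide which branches survive over~$\R$; since $Y_{r,s}$ is an infinite family, the paper runs these computations for sufficiently many $(r,s)$ and verifies the claimed periodicity by inspecting the intermediate steps (cf.\ the remark at the end of Section~\ref{sec:computations}). You instead solve the three scalar equations $(tu)^2=\epsilon_1\delta_1$, $t^r=\epsilon_2\delta_2$, $b=au^s$ directly, identifying the image $\{u^s\}$ as a coset of $m$-th roots of $\pm 1$ times a one- or two-element set, and read off $l$ and the sign $c\in\{1,s_1,s_2,s_1s_2\}$ from elementary parity bookkeeping. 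Your approach buys a uniform, computer-free proof valid for all $(r,s)$ at once; the paper's approach buys a single pipeline that also handles the parabolic types $X_9$ and $J_{10}$, where no such closed-form solution is available. One point worth making explicit: your reduction for $P_3$ hinges on the observation that the proof of Theorem~\ref{thm:sufficient_sets} replaces $\phi$ by a truncation $\phi_0^{w_0}$, hence by a \emph{real} diagonal map whenever $\phi$ is real; the paper never states a real sufficient-set theorem and instead reaches $P_3$ computationally, so this step, while correct, is an addition to what the paper provides.
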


\begin{remark}
Note that there are also equivalences between subtypes of $Y_{r,s}$ and
subtypes of $Y_{s,r}$ which can be obtained by just swapping the variables $x$
and $y$. For $r = s$ these are exactly the additional equivalences listed in
Table~\ref{tab:Yrr_equivalences}. But equivalences of this kind also occur for
$r \neq s$, e.g.\@ we have $R(X-1) \subset P_1(Y_{5,7}^{++}, Y_{7,5}^{++})$,
but we do not consider those cases in Theorem~\ref{thm:Yrs}.
\end{remark}

\begin{theorem}\label{thm:tYr}
The structure of the equivalence classes of the $\tY_r$ singularities is as
shown in Table~\ref{tab:tYr_equivalences} where in each case, $l$ and $s$ are
given by
\begin{align*}
l &:= 2\cdot{\gcd(2, r+1)}, \text{ and} \\
s &:=
\begin{cases}
  +1, &\text{if } r \equiv 0 \pmod{4}, \\
  -1, &\text{else.}
\end{cases}
\end{align*}

\begin{table}[!htbp]
\centering
\caption{$P_1$, $P_2$ and $P_3$ for the $\tY_r$ singularities}
\label{tab:tYr_equivalences}
\begin{tabular}{|c|c||c|c|c|}
\hline

$T_1$ & $T_2$ & $P_1(T_1, T_2)$ & $P_2(T_1, T_2)$ & $P_3(T_1, T_2)$ \\
\hline\hline

$\tY_r^+$ & $\tY_r^+$ &
\multirow{2}{*}{$C(X^l-1)$} &
\multirow{2}{*}{$R(X^2-1)$} &
\multirow{2}{*}{$\begin{cases}
  R(X^2-1), &\text{if } r \equiv 1 \pmod{2} \\
  R(X-1),   &\text{if } r \equiv 0 \pmod{2}
\end{cases}$} \\
\cline{1-2}

$\tY_r^-$ & $\tY_r^-$ &
&
&
\\
\hline

$\tY_r^+$ & $\tY_r^-$ &
\multirow{2}{*}{$C(X^l-s)$} &
\multirow{2}{*}{$R(X^2-s)$} &
\multirow{2}{*}{$\varnothing$} \\
\cline{1-2}

$\tY_r^-$ & $\tY_r^+$ &
&
&
\\
\hline

\end{tabular}
\end{table}

\end{theorem}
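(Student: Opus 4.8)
The plan is to imitate, for every pair $(T_1,T_2)$ with $T_1,T_2\in\{\tY_r^+,\tY_r^-\}$, the computation already carried out for $(\tY_r^+,\tY_r^+)$ in Section~\ref{ssec:Yr}. The engine is Lemma~\ref{lem:Yr_equivalences}, which (after a diagonal rescaling and an appeal to Lemma~\ref{lem:principalpart}) identifies $\NF(\tY_r^+(a))\csim\NF\bigl(Y_{r,r}^{++}(ca^2)\bigr)$ and $\NF(\tY_r^-(a))\csim\NF\bigl(Y_{r,r}^{-+}(ca^2)\bigr)$, where $c:=\left(\tfrac14\right)^r$. Given any $\C$-equivalence $\psi\colon\NF(\tY_r^{\varepsilon_1}(a))\to\NF(\tY_r^{\varepsilon_2}(b))$, I would pre- and post-compose with these fixed identifications to obtain a $\C$-equivalence between the corresponding $Y_{r,r}$ normal forms: between $Y_{r,r}^{++}$ and $Y_{r,r}^{++}$ if $\varepsilon_1=\varepsilon_2=+$, between $Y_{r,r}^{++}$ and $Y_{r,r}^{-+}$ if $\varepsilon_1=+,\varepsilon_2=-$, and so on. The relevant $Y_{r,r}$ subtypes are always among $\{Y_{r,r}^{++},Y_{r,r}^{-+}\}$ — these are the only ones linked back to $\tY_r$ — so the additional $r=s$ equivalences of Table~\ref{tab:Yrr_equivalences}, which involve $Y_{r,r}^{+-}$ and $Y_{r,r}^{--}$, never enter. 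By Theorem~\ref{thm:Yrs} specialised to $r=s$, the parameter pairs occurring for $(Y_{r,r}^{++},Y_{r,r}^{++})$ and for $(Y_{r,r}^{-+},Y_{r,r}^{-+})$ form $C(X^{l_0}-1)$, and those for $(Y_{r,r}^{++},Y_{r,r}^{-+})$ and its reverse form $C(X^{l_0}-s_1)$, where $l_0:=\gcd(2,r+1)$ and $s_1$ is the quantity from Theorem~\ref{thm:Yrs}. From $cb^2=\zeta\,ca^2$ with $\zeta^{l_0}=1$ (resp.\ $s_1$) one gets $(b/a)^{2l_0}=1$ (resp.\ $s_1$); hence $P_1(\tY_r^{\varepsilon_1},\tY_r^{\varepsilon_2})=C(X^{2l_0}-1)$ (resp.\ $C(X^{2l_0}-s_1)$). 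Since $2l_0=2\gcd(2,r+1)=l$ and $s_1$ coincides with the $s$ of the statement, this is the first column of Table~\ref{tab:tYr_equivalences}; that every such pair is actually realised (and not merely necessary) follows by traversing the square in the other direction, as in Figure~\ref{fig:Yr}.

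With $P_1$ in hand, $P_2=P_1\cap(\R\times\R)$ is immediate: among the $l$-th roots of $1$ (resp.\ $s$) — recall $l\in\{2,4\}$ is even — only $\pm1$ (resp.\ $\pm1$ if $s=1$, none if $s=-1$) are real, so $P_2(\tY_r^+,\tY_r^+)=R(X^2-1)$ and $P_2(\tY_r^+,\tY_r^-)=R(X^2-s)$, matching the table and the computation in Section~\ref{ssec:Yr}. For $P_3$ I would then test each real branch of $P_2$ for a \emph{real} transformation. The point $(a,a)$ is realised by the identity. In the diagonal rows, $(a,-a)$ is realised by $x\mapsto-x$ when $r$ is odd — this sends $\NF(\tY_r^{\varepsilon}(a))$ to $\NF(\tY_r^{\varepsilon}(-a))$ — so $P_3=R(X^2-1)$ there; when $r$ is even, $(a,-a)$ is excluded, because for one sign of $a$ the polynomial $\pm(x^2+y^2)^2+ax^r$ takes only nonnegative (resp.\ only nonpositive) values on all of $\R^2$ while for the other sign it does not, and the value set on $\R^2$ is a right-equivalence invariant — here one uses that, by finite determinacy of $\tY_r$, a real coordinate transformation realising the equivalence may be taken to be polynomial and hence defines a polynomial map $\R^2\to\R^2$, so that $g=f\circ\psi$ holds as polynomial functions. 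Hence $P_3=R(X-1)$ in the diagonal rows for $r$ even. Finally, all mixed pairs have $P_3=\varnothing$: for the rows with $s=-1$ even $P_2$ is empty, and otherwise any real $\phi$ sends the lowest-order (degree-$4$) form $(x^2+y^2)^2$ of $\NF(\tY_r^+(a))$ to $Q^2$ for a positive-definite real quadratic form $Q$, which can never equal the degree-$4$ form $-(x^2+y^2)^2$ of $\NF(\tY_r^-(b))$.

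The main obstacle I expect is not any single computation but the bookkeeping: matching the subtype dictionary correctly, tracking the signs, choosing the branch of the square root when passing from $Y_{r,r}$ back to $\tY_r$, and checking that the parity conditions indeed turn the data $l,s_1,s_2$ of Theorem~\ref{thm:Yrs} at $r=s$ into the $l=2\gcd(2,r+1)$ and $s$ of the present statement. A subtler point worth isolating is the exclusion of $(a,-a)$ in $P_3$ for $r$ even: the obstruction is \emph{global} — locally near the origin both germs are nonnegative (resp.\ nonpositive) with an isolated zero, so nothing can be read off from jets of any fixed order — and one genuinely needs the reduction to a polynomial representative of the automorphism. This global phenomenon is also the reason, made precise in Remark~\ref{rem:sufficient_sets_for_Yr}, why $\tY_r$ admits no degree-bounded sufficient set and hence cannot be handled by the elimination method of Section~\ref{sec:computations}. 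The remaining verifications — commutativity of the diagrams, the arithmetic of $l$ and $s$, and the realness checks — are then routine.
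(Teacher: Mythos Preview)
Your approach is essentially the paper's: Section~\ref{ssec:Yr} carries out exactly this computation for $(\tY_r^+,\tY_r^+)$ via the commutative square of Figure~\ref{fig:Yr} and Lemma~\ref{lem:Yr_equivalences}, then says the remaining three pairs follow similarly, which you make explicit with the correct arithmetic reducing the data $l,s_1$ of Theorem~\ref{thm:Yrs} at $r=s$ to the $l=2\gcd(2,r+1)$ and $s$ of the present statement. Your direct $4$-jet argument for the mixed-case $P_3$ and your flagging of the global-versus-local subtlety in the positivity argument for $r$ even (with the proposed cure via a polynomial representative from finite determinacy) are both points the paper passes over without comment.
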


\begin{theorem}\label{thm:exceptional}
The structure of the equivalence classes of the exceptional unimodal
singularities is as shown in Table~\ref{tab:exceptional_equivalences}.

\begin{table}[!htbp]
\centering
\caption{$P_1$, $P_2$ and $P_3$ for the exceptional unimodal singularities}
\label{tab:exceptional_equivalences}
\begin{tabular}{|c|c||c|c|c|}
\hline
$T_1$ & $T_2$ & $P_1(T_1, T_2)$ & $P_2(T_1, T_2)$ & $P_3(T_1, T_2)$ \\
\hline\hline
$E_{12}$   & $E_{12}$   & $C_0(X^{21}-1)$ & $R_0(X-1)$    & $R_0(X-1)$ \\
\hline
$E_{13}$   & $E_{13}$   & $C_0(X^{15}-1)$ & $R_0(X-1)$    & $R_0(X-1)$ \\
\hline
$E_{14}^+$ & $E_{14}^+$ & $C_0(X^{12}-1)$ & $R_0(X^2-1)$  & $R_0(X-1)$ \\
\hline
$E_{14}^-$ & $E_{14}^-$ & $C_0(X^{12}-1)$ & $R_0(X^2-1)$  & $R_0(X-1)$ \\
\hline
$E_{14}^+$ & $E_{14}^-$ & $C_0(X^{12}+1)$ & $\varnothing$ & $\varnothing$ \\
\hline
$E_{14}^-$ & $E_{14}^+$ & $C_0(X^{12}+1)$ & $\varnothing$ & $\varnothing$ \\
\hline
$Z_{11}$   & $Z_{11}$   & $C_0(X^{15}-1)$ & $R_0(X-1)$    & $R_0(X-1)$ \\
\hline
$Z_{12}$   & $Z_{12}$   & $C_0(X^{11}-1)$ & $R_0(X-1)$    & $R_0(X-1)$ \\
\hline
$Z_{13}^+$ & $Z_{13}^+$ & $C_0(X^9-1)$    & $R_0(X-1)$    & $R_0(X-1)$ \\
\hline
$Z_{13}^-$ & $Z_{13}^-$ & $C_0(X^9-1)$    & $R_0(X-1)$    & $R_0(X-1)$ \\
\hline
$Z_{13}^+$ & $Z_{13}^-$ & $C_0(X^9+1)$    & $R_0(X+1)$    & $\varnothing$ \\
\hline
$Z_{13}^-$ & $Z_{13}^+$ & $C_0(X^9+1)$    & $R_0(X+1)$    & $\varnothing$ \\
\hline
$W_{12}^+$ & $W_{12}^+$ & $C_0(X^{10}-1)$ & $R_0(X^2-1)$  & $R_0(X-1)$ \\
\hline
$W_{12}^-$ & $W_{12}^-$ & $C_0(X^{10}-1)$ & $R_0(X^2-1)$  & $R_0(X-1)$ \\
\hline
$W_{12}^+$ & $W_{12}^-$ & $C_0(X^{10}+1)$ & $\varnothing$ & $\varnothing$ \\
\hline
$W_{12}^-$ & $W_{12}^+$ & $C_0(X^{10}+1)$ & $\varnothing$ & $\varnothing$ \\
\hline
$W_{13}^+$ & $W_{13}^+$ & $C_0(X^8-1)$    & $R_0(X^2-1)$  & $R_0(X-1)$ \\
\hline
$W_{13}^-$ & $W_{13}^-$ & $C_0(X^8-1)$    & $R_0(X^2-1)$  & $R_0(X-1)$ \\
\hline
$W_{13}^+$ & $W_{13}^-$ & $C_0(X^8+1)$    & $\varnothing$ & $\varnothing$ \\
\hline
$W_{13}^-$ & $W_{13}^+$ & $C_0(X^8+1)$    & $\varnothing$ & $\varnothing$ \\
\hline
\end{tabular}
\end{table}

\end{theorem}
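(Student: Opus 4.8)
The plan is to reduce the problem to diagonal coordinate transformations, after which everything becomes a manipulation with roots of unity that can be carried out row by row in Table~\ref{tab:exceptional_equivalences}. Every exceptional main type in Table~\ref{tab:sufficient_sets} has sufficient set $S = \s(\{x\}, \{y\})$, so by Theorem~\ref{thm:sufficient_sets} it suffices, for the computation of $P_1(T_1, T_2)$, to consider automorphisms $\phi$ of the form $\phi(x) = t_1 x$, $\phi(y) = t_2 y$ with $t_1, t_2 \in \C^*$. For $P_3$ I would use the \emph{proof} of Theorem~\ref{thm:sufficient_sets} rather than just its statement: in the exceptional case it shows that for any $\phi$ taking $\NF(T_1(u))$ to $\NF(T_2(v))$ the jet $\phi_\delta^w$ already lies in $\s(\{x\},\{y\})$ and satisfies $\phi(f) = \phi_\delta^w(f)$; since $\phi_\delta^w$ inherits real coefficients from $\phi$, it follows that $\NF(T_1(u)) \rsim \NF(T_2(v))$ if and only if some \emph{real} diagonal transformation does the job. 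Finally $P_2(T_1,T_2) = P_1(T_1,T_2) \cap (\R \times \R)$.

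Fix a pair $(T_1, T_2)$ of subtypes of the same exceptional type and write, using Table~\ref{tab:normal_forms},
\begin{align*}
\NF(T_1(a)) &= \varepsilon_1\, x^{p_1} y^{q_1} + \varepsilon_2\, x^{p_2} y^{q_2} + a\, x^{p_0} y^{q_0}, \\
\NF(T_2(b)) &= \delta_1\, x^{p_1} y^{q_1} + \delta_2\, x^{p_2} y^{q_2} + b\, x^{p_0} y^{q_0},
\end{align*}
with signs $\varepsilon_i, \delta_i \in \{\pm 1\}$, where the exponent vectors $(p_1, q_1)$ and $(p_2, q_2)$ are linearly independent (cf.\ the proof of Theorem~\ref{thm:sufficient_sets}, which relies on $\mu(f_0) < \infty$). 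Applying a diagonal $\phi$ and comparing the coefficients of the three monomials turns $\phi(\NF(T_1(a))) = \NF(T_2(b))$ into
\[
t_1^{p_1} t_2^{q_1} = \delta_1/\varepsilon_1, \quad
t_1^{p_2} t_2^{q_2} = \delta_2/\varepsilon_2, \quad
b = a\, t_1^{p_0} t_2^{q_0} \,.
\]
The pairs $(t_1,t_2)$ solving the first two equations form a coset of the finite group $G := \{(t_1,t_2) \in (\C^*)^2 : t_1^{p_1}t_2^{q_1} = t_1^{p_2}t_2^{q_2} = 1\}$, whose order is $|p_1 q_2 - p_2 q_1|$; its image under the character $(t_1,t_2) \mapsto t_1^{p_0}t_2^{q_0}$ is a finite, hence cyclic, subgroup $\mu_l \subset \C^*$, so the attainable ratios $r = b/a$ are exactly the $l$-th roots of some fixed $s \in \C^*$. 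Therefore $P_1(T_1, T_2) = C_0(X^l - s)$, the origin being covered by the identity. A short computation in each of the twenty rows then gives $l$ (the exponent displayed in the $P_1$-entry) and shows that $s = +1$ whenever $T_1 = T_2$ — where one may take $t_1 = t_2 = 1$ — and $s = -1$ in the mixed rows, namely those with $\{T_1,T_2\}$ equal to $\{E_{14}^+,E_{14}^-\}$, $\{Z_{13}^+,Z_{13}^-\}$, $\{W_{12}^+,W_{12}^-\}$, or $\{W_{13}^+,W_{13}^-\}$.

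It remains to pass to the real statements. For $P_2$ I intersect $C_0(X^l-s)$ with $\R \times \R$: away from the origin this forces $r$ to be a real $l$-th root of $s$, so the result depends only on the parity of $l$ and the sign of $s$ and comes out to $R_0(X-1)$, $R_0(X^2-1)$, $R_0(X+1)$, or $\varnothing$ exactly as tabulated. For $P_3$ I additionally impose $t_1,t_2 \in \R^*$ in the system above: when $T_1 = T_2$ one checks that every real solution satisfies $t_1^{p_0}t_2^{q_0} = 1$, whence $P_3(T_1,T_2) = R_0(X-1)$; in each mixed row one of the two sign conditions becomes $t^{2m} = -1$ for some $m$, which has no real solution, whence $P_3(T_1,T_2) = \varnothing$. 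Given Theorem~\ref{thm:sufficient_sets} I expect no conceptual difficulty; the point needing genuine care is to verify that the character map hits the \emph{entire} coset of $\mu_l$ — so that every $l$-th root of $s$, not just some, is realized — and, hand in hand with this, that $s$ is $\pm 1$ and not a higher root of unity. This is where the twenty case distinctions enter, each being a short mechanical computation that could also be performed in \Singular{} as in Section~\ref{sec:computations}.
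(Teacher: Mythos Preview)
Your reduction to diagonal transformations via Theorem~\ref{thm:sufficient_sets}, followed by a direct analysis of the resulting monomial system using the character $(t_1,t_2)\mapsto t_1^{p_0}t_2^{q_0}$ on the finite group $G$, is correct and yields the tabulated $C_0(X^l-s)$ without difficulty once the twenty exponent triples are plugged in. This is genuinely different from what the paper does: the paper treats all main types uniformly by the computational pipeline of Section~\ref{sec:computations} (coefficient comparison, elimination in \Singular{}, factorization, then Gr\"obner covers and primary decomposition to decide $P_3$), and does not exploit the fact that for the exceptional types the sufficient set is already diagonal so that the elimination step is vacuous and the whole problem collapses to roots of unity. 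Your route is more elementary and more transparent here; the paper's route has the advantage of being uniform across the parabolic, hyperbolic and exceptional families. Your handling of $P_3$ is also sharper than the paper's: you observe that the \emph{proof} of Theorem~\ref{thm:sufficient_sets} produces from any real $\phi$ a real diagonal $\phi_\delta^w$ with $\phi(f)=\phi_\delta^w(f)$, so real diagonal maps already form a sufficient set over $\R$, whereas the paper falls back on Gr\"obner covers to detect real solutions. One small point to tidy up: your phrase ``the origin being covered by the identity'' only justifies $(0,0)\in P_1$ when $T_1=T_2$; in the mixed rows $(0,0)$ is still in $P_1$ (and in $P_2$), but because the coset of $G$ is nonempty over $\C$, not because of the identity. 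This does not affect the argument, only the wording.
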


\section{Interpretation of the Results}\label{sec:interpretation}

Looking more closely at Theorem~\ref{thm:sufficient_sets} and
Section~\ref{sec:results}, it turns out that the structure of the equivalence
classes is quite simple for some singularity types whereas it is very involved
for others. To describe this in more detail, let us first consider the
\emph{sufficient sets} given in Theorem~\ref{thm:sufficient_sets}:
\begin{itemize}
\item
It suffices to work with scalings of the form $\phi(x) = \alpha x$,
$\phi(y) = \beta y$ as coordinate transformations to figure out the structure
of the equivalence classes of the hyperbolic and exceptional unimodal
singularities. (To be precise, for $Y_{r,s}$ with $r = s$ we also have to take
into account transformations of the form $\phi(x) = \beta y$,
$\phi(y) = \alpha x$ where the variables are swapped, cf.\@
Table~\ref{tab:sufficient_sets}.)
\item
For the two parabolic types $X_9$ and $J_{10}$, scalings are not sufficient.
Instead, we have to consider more complicated transformations which involve
more terms.
\item
One can see from the proof of Theorem~\ref{thm:sufficient_sets} that these
differences reflect the different shapes of the normal forms: The normal forms
of the hyperbolic singularity types listed in Table~\ref{tab:sufficient_sets}
are weighted quasihomogeneous, those of the exceptional singularity types are
semi-quasihomogeneous. Both shapes turn out to be very restrictive w.r.t.\@
possible coordinate transformations. In contrast to this, the normal forms of
the parabolic types are quasihomogeneous and thus allow for more freedom in
this regard.
\item
The singularity type $\tY_r$ is an exception. It is complex equivalent to
$Y_{r,r}$, but it appears as a separate singularity type over $\R$. There is no
degree-bounded sufficient set for the normal form of this type (cf.\@
Remark~\ref{rem:sufficient_sets_for_Yr}), so the computational methods
described in Sections~\ref{ssec:computing_P1} and \ref{ssec:computing_P3} do
not work. Instead, we have to use other methods, cf.\@ Section~\ref{ssec:Yr}.
\end{itemize}

As a consequence of the differences w.r.t.\@ sufficient sets described above,
there are two \emph{general forms of equivalences} as presented in
Section~\ref{sec:results}:
\begin{itemize}
\item
For the hyperbolic and the exceptional singularities, the equivalences between
different subtypes can be described by constant factors. More precisely, if
$T_1$ and $T_2$ are subtypes of the same hyperbolic or exceptional main
singularity type, then there exists a finite set of constants
$r_1, \ldots, r_m \in \C$ such that the equivalences between the normal forms
of $T_1$ and $T_2$ are exactly those of the form
$\NF(T_1(a)) \sim \NF(T_2(r_i a))$ with $a \in \C$ or $a \in \R$ as
appropriate. Therefore we use the notations $C(p(X))$, $R(p(X))$ and
$C_0(p(X))$, $R_0(p(X))$ with $p(X) \in \C[X]$ (see Definition~\ref{def:C_R})
for the hyperbolic and the exceptional cases, respectively, cf.\@
Theorems~\ref{thm:J10+k} to \ref{thm:exceptional}.
\item
The equivalences which occur among subtypes of the two parabolic singularity
types $X_9$ and $J_{10}$ are much more involved and cannot be written down in
terms of constant factors. We describe them as joint graphs of certain
functions, cf.\@ Theorems~\ref{thm:X9} and \ref{thm:J10}.
\end{itemize}

The results presented in Section~\ref{sec:results} have \emph{consequences for
the algorithmic classification of the unimodal singularities of corank $2$ over
$\R$}. They are indeed intended to be the first step in this direction. Once
again, we can distinguish between different cases. Note that the following
remarks apply to the classification over $\R$ and therefore only deal with real
coordinate transformations and real values of the involved parameters:
\begin{itemize}
\item
In the exceptional cases, there are no equivalences between different subtypes
of the same main type and the value of the parameter is uniquely determined,
cf.\@ Theorem~\ref{thm:exceptional}.
\item
For the singularity types $J_{10+k}$, $X_{9+k}$, and $\tY_r$, there are no
equivalences between different subtypes of the same main type, but the
parameter can in some cases change its sign within the same subtype, cf.\@
Theorems~\ref{thm:J10+k}, \ref{thm:X9+k}, and \ref{thm:tYr}.
\item
For $Y_{r,s}$, there are equivalences even between different subtypes.
Therefore the question which real subtype a given singularity of main type
$Y_{r,s}$ belongs to is not always well-posed, e.g., a singularity can be both
of type $Y_{5,7}^{++}$ and of type $Y_{5,7}^{+-}$. However, the first of the
two signs is always uniquely determined. The parameter can change its sign, but
its absolute value is uniquely determined, cf.\@ Theorem~\ref{thm:Yrs}.
\item
The structures of the equivalence classes of the two parabolic cases $X_9$ and
$J_{10}$ are the most complicated among all the cases discussed here. There are
equivalences between different subtypes and the parameter may change in
non-trivial ways. For $X_9$, the possible values which a given parameter can be
transformed to can be expressed as rational functions of this parameter (cf.\@
Theorem~\ref{thm:X9}), whereas the corresponding functions for $J_{10}$ involve
radical expressions (cf.\@ Theorem~\ref{thm:J10}). Note that there are,
however, no equivalences between the subtypes $X_9^{++}$, $X_9^{--}$ on the one
hand and $X_9^{+-}$, $X_9^{-+}$ on the other hand, i.e.\@ the product of the
two signs which occur in the subtypes of $X_9$ is uniquely determined.
\item
It is a remarkable result that for any value of $a \in \R$, the normal form of
$J_{10}^-(a)$ is $\R$-equivalent to the normal form of $J_{10}^+(a')$ for some
$a' \in \R$ while the converse is not true, cf.\@ Theorem~\ref{thm:J10}. In
other words, the real subtype $J_{10}^-$ is redundant whereas $J_{10}^+$ is
not.
\end{itemize}

To sum up, the normal forms which are listed in the classifications of the
unimodal singularities over $\C$ and over $\R$ by \citet{AVG1985} cover the
whole space of unimodal singularities, but some of them are equivalent. There
are equivalences between normal forms for different values of the parameter and
also between different subtypes, to an extend that the subtype $J_{10}^-$ is
even redundant.

\section{Acknowledgements}

We would like to thank Claus Fieker, Gert-Martin Greuel, and Gerhard Pfister
for many fruitful discussions.

\clearpage

\end{document}